\apptocmd{\sloppy}{\hbadness 10000\relax}{}{}
\setlist{itemsep=0em, topsep=0em, parsep=0em}
\setlist[enumerate]{label=(\alph*)}
\tikzset{
	rewritenode/.style={shape=circle,draw,minimum size=1em}
}
\tikzset{
	RWopen/.style={shape=circle,draw=black,fill=white,scale=0.5,font=\Huge}
}
\tikzset{
	RWclosed/.style={shape=circle,fill=black,scale=0.5,font=\Huge}
}
\tikzset{CDnode/.style={shape=circle,fill=white,scale=.5}}
\tikzset{->-/.style={decoration={markings,mark=at position .5 with {\arrow{>}}},postaction={decorate}}}
\definecolor{hyperrefcolor}{rgb}{0,0,0.7}
\renewcommand{\epsilon}{\varepsilon}
\newcommand{\cat}[1]{\mathbf{#1}}
\newcommand{\dblcat}[1]{\mathbb{#1}}
\renewcommand{\t}[1]{\textup{#1}}
\newcommand{\from}{\colon}
\newcommand{\xto}[1]{\xrightarrow{#1}}
\newcommand{\tospan}{\xrightarrow{\mathrm{sp}}}
\newcommand{\tocospan}{\xrightarrow{\mathrm{csp}}}
\newcommand{\bispmap}[1]{\mathbf{Sp(#1)}}
\newcommand{\bimonspcsp}[1]{\mathbf{MonicSp(Csp(#1))}}
\newcommand{\dblmonspcsp}[1]{\mathbb{M}\mathbf{onicSp(Csp(#1))}}
\def\slashedarrowfill@#1#2#3#4#5{%
	$\m@th\thickmuskip0mu\medmuskip\thickmuskip\thinmuskip\thickmuskip
	\relax#5#1\mkern-7mu%
	\cleaders\hbox{$#5\mkern-2mu#2\mkern-2mu$}\hfill
	\mathclap{#3}\mathclap{#2}%
	\cleaders\hbox{$#5\mkern-2mu#2\mkern-2mu$}\hfill
	\mkern-7mu#4$%
}
\def\rightslashedarrowfill@{%
	\slashedarrowfill@\relbar\relbar\mapstochar\rightarrow}
\newcommand{\xslashedrightarrow}[2][]{%
	\ext@arrow 0055{\rightslashedarrowfill@}{#1}{#2}}
\newcommand{\hto}{\xslashedrightarrow{}}
\DeclareMathOperator{\id}{id}
\newtheorem{thm}{Theorem}[section]
\newtheorem{lem}[thm]{Lemma}
\theoremstyle{remark}
\theoremstyle{definition}
\newtheorem{defn}[thm]{Definition}
\keywords{bicategory, graph rewrite, network, span, symmetric monoidal, topos}
\begin{document}
\sloppy	

\title{Spans of cospans in a topos}
\author{Daniel Cicala and Kenny Courser}
\maketitle

\address{Department of Mathematics, University of California\\
 Riverside, CA 92521, USA\\[5pt]}

\eaddress{%
cicala@math.ucr.edu \\
\null \hspace{2.6em} courser@math.ucr.edu}

\begin{abstract}
For a topos $\cat{T}$, 
there is a bicategory
$\cat{MonicSp(Csp(T))}$ 
whose objects are those of $\mathbf{T}$,
morphisms are cospans in $\mathbf{T}$, 
and 2-morphisms are 
isomorphism classes of monic spans of cospans 
in $\mathbf{T}$. 
Using a result of Shulman, 
we prove that $\cat{MonicSp(Csp(T))}$ is 
symmetric monoidal, and moreover, that it is compact closed in the sense of Stay.
We provide an application which illustrates how
to encode double pushout rewrite rules
as $2$-morphisms inside a compact closed sub-bicategory of
$\mathbf{MonicSp(Csp(Graph))}$.
\end{abstract}

\section{Introduction} 
\label{sec:Introduction}

There has been extensive work done on bicategories involving spans or cospans in some way.
Given a finitely complete category $\cat{D}$,
B\'{e}nabou \cite{Be} 
was the first to construct the bicategory
$\bispmap{D}$ consisting of
objects of $\cat{D}$, spans in $\cat{D}$,
and maps of spans in $\cat{D}$, and this was in fact one of the 
first bicategories ever constructed.
Later, Stay showed that $\bispmap{D}$ 
is a compact closed symmetric monoidal bicategory
\cite{Stay}.
Various other authors have considered bicategories and higher categories with maps of spans or spans of spans as 2-morphisms \cite{Haug,Hoff,Nie,Reb,Stay}, as well as categories and bicategories in which the morphisms are cospans `decorated' with extra structure \cite{BaezCoyaRebro, BaezFong, BaezFongPollard, BaezPollard,Cour,Fong}.
Here, however, we pursue a different line of thought and study spans of cospans. 

A span of cospans is a 
commuting diagram with shape
\[
\begin{tikzpicture}
\node (A) at (0,0) {$\bullet$};
\node (B) at (1,1) {$\bullet$};
\node (B') at (1,0) {$\bullet$};
\node (B'') at (1,-1) {$\bullet$};
\node (C) at (2,0) {$\bullet$};
\path[->,font=\scriptsize,>=angle 90]
(A) edge node[above]{$ $} (B)
(A) edge node[above]{$ $} (B')
(A) edge node[above]{$ $} (B'')
(C) edge node[above]{$ $} (B)
(C) edge node[above]{$ $} (B')
(C) edge node[left]{$ $} (B'')
(B') edge[->] node[right]{$ $} (B)
(B') edge[->] node[left]{$ $} (B'');
\end{tikzpicture}
\]
These were found to
satisfy a lax interchange law
by Grandis and Par\'{e}
in their paper on `intercategories' 
\cite{GranPare_Intercats}.
Later on, the first listed author of the present work constructed
a bicategory with isomorphism classes of spans of cospans
as $2$-morphisms \cite{Cic}. 
However, for the interchange law to be invertible, the $2$-morphisms
were restricted to 
spans of cospans inside
a topos $\mathbf{T}$ such that the 
span legs were monomorphisms.

We denote the bicategory of monic spans of cospans
inside of a topos $\cat{T}$ by $\bimonspcsp{T}$. 
It has the objects of $\cat{T}$
for objects,
cospans in $\cat{T}$ for morphisms,
and isomorphism classes of monic spans of cospans
in $\cat{T}$ for $2$-morphisms.
Horizontal composition is given 
by pushouts and
vertical composition is
given by pullbacks.
Thus we require that certain 
limits and colimits exist and, moreover, 
that pushouts preserve monomorphisms. 
This occurs in every topos.


Our first result is that 
the bicategory $\bimonspcsp{T}$ 
is symmetric monoidal. 
The definition of 
symmetric monoidal bicategory is long \cite{Stay}, 
so checking every condition 
by hand is time consuming. 
Shulman 
\cite{Shul} 
provides a less tedious process. 
The idea is to construct an 
`isofibrant pseudo double category' 
that restricts, in a suitable sense, 
to the bicategory 
that we are interested in.  
If this double category is symmetric monoidal, 
then the `restricted' bicategory is
symmetric monoidal as well.  
The advantage of this method 
is its relative efficiency; it is much easier to check that 
a double category is symmetric monoidal
than it is to check that a bicategory is symmetric monoidal.

Our second result is that $\bimonspcsp{T}$ 
is compact closed.
Stay \cite{Stay} has defined a 
compact closed bicategory 
to be a symmetric monoidal bicategory 
in which every object has a dual. 
The details seem more involved than in the ordinary category case due to the coherence laws.  
However, Pstr\k{a}gowski \cite{Piotr} has found a way to avoid checking the worst of these laws: the `swallowtail equations'.

The primary motivation for constructing $\bimonspcsp{T}$ 
is the case when $\cat{T}$ 
is the topos $\cat{Graph}$ 
of directed graphs.
We are also interested in 
certain labeled graphs obtained
from various slice categories of 
$\cat{Graph}$.
This is a useful framework to study 
open graphs; that is, graphs with
a subset of nodes serving as `inputs' and `outputs'
and `rewrite rules' of such \cite{Cic}.
Graphical calculi fit nicely
into this picture as well \cite{Cic_zx}.

Let us illustrate how this 
works with open graphs.
We begin with a compact closed sub-bicategory
of $\bimonspcsp{Graph}$ which we shall call 
	$\cat{Rewrite}$. 
The sub-bicategory $\cat{Rewrite}$ is 1-full and 2-full on edgeless graphs and
the 1-morphisms model open graphs 
by specifying the inputs and outputs
with the legs of the cospans. 
For example, consider the following cospan of graphs.
\[
\begin{tikzpicture}
\begin{scope}[shift={(0.7,2)}]
\draw[rounded corners,gray] (-0.3,1.6) rectangle (0.5,3.4);
\node[circle,draw] (ai) at (0.1,2.9) {\scriptsize $a$};
\node[circle,draw] (bi) at (0.1,2.1) {\scriptsize $b$};
\end{scope}
\begin{scope}
\draw[rounded corners,gray] (1.8,3.6) rectangle (3.6,5.4);
\node[circle,draw] (a1) at (2.2,4.9) {\scriptsize $a$};
\node[circle,draw] (b1) at (2.2,4.1) {\scriptsize $b$};
\node[circle,draw] (c1) at (3.2,4.5) {\scriptsize $c$};
\draw [->-] (a1) to [out=0,in=140] (c1);
\draw [->-] (b1) to [out=0,in=220] (c1);
\end{scope}
\begin{scope}[shift={(-0.6,2)}]
\draw[rounded corners,gray] (4.8,1.6) rectangle (5.6,3.4);
\node[circle,draw] (co) at (5.2,2.5) {\scriptsize $c$};
\end{scope}
\node (v1) at (1.2,4.5) {};
\node (v2) at (1.8,4.5) {};
\node (v3) at (4.2,4.5) {};
\node (v4) at (3.6,4.5) {};
\draw [->,thick] (v1) edge (v2);
\draw [->,thick] (v3) edge (v4);
\end{tikzpicture}
\]
The node labels indicate the graph morphism behaviors. 
Here, the nodes $a$ and $b$ are inputs 
and $c$ is an output. 
The use of the terms \emph{inputs} and \emph{outputs} 
is justified by the composition,
which can be thought of as 
connecting the inputs of one graph
to compatible outputs of another graph. 
This is made precise with pushouts.  
For instance, we can compose
\[
\begin{tikzpicture}
\begin{scope}[shift={(0.1,2)}]
\draw[rounded corners,gray] (-0.3,1.7) rectangle (0.5,3.3);
\node[circle,draw] (ai) at (0.1,2.9) {\scriptsize $a$};
\node[circle,draw] (bi) at (0.1,2.1) {\scriptsize $b$};
\end{scope}
\begin{scope}[shift={(-0.6,0)}]
\draw[rounded corners,gray] (1.8,3.7) rectangle (3.6,5.3);
\node[circle,draw] (a1) at (2.2,4.9) {\scriptsize $a$};
\node[circle,draw] (b1) at (2.2,4.1) {\scriptsize $b$};
\node[circle,draw] (c1) at (3.2,4.5) {\scriptsize $c$};
\draw [->-] (a1) to [out=0,in=140] (c1);
\draw [->-] (b1) to [out=0,in=220] (c1);
\end{scope}
\begin{scope}[shift={(-0.6,1.9)}]
\draw[rounded corners,gray] (4.2,1.8) rectangle (5,3.4);
\node[circle,draw] (cm) at (4.6,2.6) {\scriptsize $c$};
\end{scope}
\begin{scope}[shift={(-1.6,0.1)}]
\draw[rounded corners,gray] (6.6,3.6) rectangle (8.4,5.2);
\node[circle,draw] (c2) at (7,4.4) {\scriptsize $c$};
\node[circle,draw] (d2) at (8,4.4) {\scriptsize $d$};
\draw [->-] (c2) to (d2);
\end{scope}
\begin{scope}[shift={(-2.1,2)}]
\draw[rounded corners,gray] (9.5,1.7) rectangle (10.3,3.3);
\node[circle,draw] (do) at (9.9,2.5) {\scriptsize $d$};
\end{scope}
\node (v1) at (0.6,4.5) {};
\node (v2) at (1.3,4.5) {};
\node (v4) at (3,4.5) {};
\node (v3) at (3.6,4.5) {};
\node (v5) at (4.4,4.5) {};
\node (v6) at (5,4.5) {};
\node (v8) at (6.8,4.5) {};
\node (v7) at (7.4,4.5) {};
\draw [->,thick]  (v1) edge (v2);
\draw [->,thick] (v3) edge (v4);
\draw [->,thick] (v5) edge (v6);
\draw [->,thick] (v7) edge (v8);
\end{tikzpicture}
\]
to obtain
\[
\begin{tikzpicture}
\begin{scope}[shift={(0.7,0)}]
\draw[rounded corners,gray] (-0.3,1.7) rectangle (0.5,3.3);
\node[circle,draw] (ai) at (0.1,2.9) {\scriptsize $a$};
\node[circle,draw] (bi) at (0.1,2.1) {\scriptsize $b$};
\end{scope}
\begin{scope}[shift={(0,-2)}]
\draw[rounded corners,gray] (1.8,3.7) rectangle (4.6,5.3);
\node[circle,draw] (a1) at (2.2,4.9) {\scriptsize $a$};
\node[circle,draw] (b1) at (2.2,4.1) {\scriptsize $b$};
\node[circle,draw] (c1) at (3.2,4.5) {\scriptsize $c$};
\node[circle,draw] (d1) at (4.2,4.5) {\scriptsize $d$};
\draw [->-] (a1) to [out=0,in=140] (c1);
\draw [->-] (b1) to [out=0,in=220] (c1);
\draw [->-] (c1) to (d1);
\end{scope}
\begin{scope}[shift={(-0.7,0)}]
\draw[rounded corners,gray] (5.9,1.7) rectangle (6.7,3.3);
\node[circle,draw] (co) at (6.3,2.5) {\scriptsize $d$};
\end{scope}
\node (v1) at (1.8,2.5) {};
\node (v2) at (1.2,2.5) {};
\node (v3) at (4.6,2.5) {};
\node (v4) at (5.2,2.5) {};
\draw [<-,thick]  (v1) edge (v2);
\draw [<-,thick] (v3) edge (v4);
\end{tikzpicture}
\]
A 2-morphism in $\cat{Rewrite}$ 
is the rewriting of one graph 
into another in a way that 
preserves inputs and outputs. 
For instance,
\begin{equation}
\label{diag:MonoidUnit}
\begin{tikzpicture}[baseline=(current bounding box.center)]
	\begin{scope}[shift={(0.3,0.7)}]
	\draw[rounded corners,gray] (-0.5,2.1) rectangle (0.3,2.9);
	\node[circle,draw] (ai) at (-0.1,2.5) {\scriptsize $a$};
	\end{scope}
	\begin{scope}[shift={(-0.1,0.6)}]
	\draw[rounded corners,gray] (1.5,3.6) rectangle (4.1,5.4);
	\node[circle,draw] (a1) at (1.9,5) {\scriptsize $a$};
	\node[circle,draw] (b1) at (2.75,5) {\scriptsize $b$}; 
	\node[circle,draw] (c1) at (1.9,4) {\scriptsize $c$}; 
	\node[circle,draw] (d1) at (2.75,4) {\scriptsize $d$};
	\node[circle,draw] (e1) at (3.7,4.5) {\scriptsize $e$};
	\draw [->-] (a1) to (b1);
	\draw [->-] (b1) to [out=0,in=140] (e1);
	\draw [->-] (c1) to (d1);
	\draw [->-] (d1) to [out=0,in=220] (e1);
	\end{scope}
	\begin{scope}[shift={(0,0.7)}]
	\draw[rounded corners,gray] (1.4,2.1) rectangle (4,2.9);
	\node[circle,draw] (a2) at (1.8,2.5) {\scriptsize $a$};
	\node[circle,draw] (b2) at (3.6,2.5) {\scriptsize $e$};
	\end{scope}
	\begin{scope}[shift={(-0.2,1.3)}]
	\draw[rounded corners,gray] (1.6,0.1) rectangle (4.2,0.9);
	\node[circle,draw] (a3) at (2,0.5) {\scriptsize $a$};
	\node[circle,draw] (b3) at (3.8,0.5) {\scriptsize $e$};
	\draw [->-] (a3) to (b3);
	\end{scope}
	\begin{scope}[shift={(0.2,0.3)}]
	\draw[rounded corners,gray] (4.7,2.5) rectangle (5.5,3.3);
	\node[circle,draw] (bo) at (5.1,2.9) {\scriptsize $e$};
	\end{scope}
	\node (v1) at (0.6,3.2) {};
	\node (v2) at (1.4,5.1) {};
	\node (v3) at (1.4,3.2) {};
	\node (v4) at (1.4,1.7) {};
	\node (v5) at (4.8,3.2) {};
	\node (v6) at (4,5.1) {};
	\node (v7) at (4,3.2) {};
	\node (v8) at (4,1.8) {};
	\node (v9) at (2.7,3.6) {};
	\node (v10) at (2.7,4.2) {};
	\node (v11) at (2.7,2.8) {};
	\node (v12) at (2.7,2.2) {};
	\draw [->,thick]  (v1) edge [out=0,in=180] (v2);
	\draw [->,thick] (v1) edge (v3);
	\draw [->,thick] (v1) edge [out=0,in=180] (v4);
	\draw [->,thick] (v5) edge [out=180,in=0] (v6);
	\draw [->,thick] (v5) edge (v7);
	\draw [->,thick] (v5) edge [out=180,in=0] (v8);
	\draw [->,thick] (v9) edge (v10);
	\draw [->,thick] (v11) edge (v12);
\end{tikzpicture}
\end{equation}
The bicategory $\cat{Rewrite}$ is not of interest for its own sake.
It serves as an ambient context in which to freely generate 
a compact closed bicategory 
from some collection of morphisms and 2-morphisms. 
Of course, the choice of collection depends on one's interests.  
 
The structure of the paper is as follows.  
In Section 
	\ref{sec:Span cospan bicats}, 
we introduce the bicategory $\bimonspcsp{T}$.  
In Section 
	\ref{sec:DoubleCategories}, 
we review how symmetric monoidal double categories can be used 
to show that certain bicategories have a symmetric monoidal structure,
and moreover, when a symmetric monoidal bicategory is compact closed.  
Nothing in this section is new, 
but we use these results 
in Section \ref{sec:SpansCospans} 
to show that $\bimonspcsp{T}$ is compact closed.
Finally, in Section 
	\ref{sec:Applications}, 
we discuss an application that illustrates the consequeces of the bicategory $\mathbf{MonicSp(Csp(Graph))}$ being compact closed
in regard to rewriting open graphs.

\section{The bicategory \texorpdfstring{$\mathbf{MonicSp(Csp(T))}$}{}} 
\label{sec:Span cospan bicats}

In this section, we recall the bicategory $\bimonspcsp{T}$ and some important related concepts.  
Throughout this paper, 
$\cat{T}$ is a topos.

Spans of cospans were considered by Kissinger
in his thesis \cite{Kiss} in the context of rewriting,
and also by
Grandis and Par\'{e} 
	\cite{GranPare_Intercats} 
who found a lax interchange law. 
Later, the first listed author of the present work showed that the interchange law is invertible
when restricting attention to a topos $\cat{T}$ and monic spans \cite{Cic}, meaning that each morphism is a monomorphism. 
This gives a bicategory $\bimonspcsp{T}$ with 
$\cat{T}$-objects for objects, 
cospans for morphisms, 
and isomorphism classes of monic spans of cospans for 2-morphisms. 
A monic span of cospans is a commuting diagram of the form
\[
\begin{tikzpicture}[]
	\node (A) at (0,0) {$x$};
	\node (B) at (1,1) {$y$};
	\node (B') at (1,0) {$y'$};
	\node (B'') at (1,-1) {$y''$};
	\node (C) at (2,0) {$z$};
	\path[->,font=\scriptsize]
	(A) edge node[above]{$ $} (B)
	(A) edge node[above]{$ $} (B')
	(A) edge node[above]{$ $} (B'')
	(C) edge node[above]{$ $} (B)
	(C) edge node[above]{$ $} (B')
	(C) edge node[left]{$ $} (B'')
	(B') edge[>->] node[right]{$ $} (B)
	(B') edge[>->] node[left]{$ $} (B'');
\end{tikzpicture}
\]
where the '$\rightarrowtail$' arrows denotes a monomorphism, and two of these are isomorphic if there is an isomorphism $\theta$ such that the following diagram commutes:
\[
\begin{tikzpicture}[scale=1]
	\node (X) at (-2,0) {$x$};
	\node (L) at (1.25,1.5) {$y$};
	\node (Y) at (2,0) {$z$};
	\node (S) at (-1.25,-1.5) {$y''$};
	\node (S'1) at (0,0.75) {$y'_0$};
	\node (S'2) at (0,-0.75) {$y'_1$};
	\draw [->] (X) edge[in=180,out=60] (L);
	\draw [->] (X) edge[] (S'1);
	\draw [->] (X) edge[] (S'2);
	\draw [->] (X) edge[out=-90,in=150] (S);
	\draw [->] (Y) edge[out=90,in=-30] (L);
	\draw [->] (Y) edge[] (S'2);
	\draw [->] (Y) edge[out=-120,in=0] (S);
	\draw [>->] (S'1) edge[] (L);
	\draw [>->] (S'1) edge[white,line width=3.5pt] (S);
	\draw [>->] (S'1) edge[] (S);
	\draw [->] (S'1) edge[] (S);
	\draw [->] (S'1) edge node[right]{$\theta$} (S'2);
	\draw [>->] (S'2) edge[] (L);
	\draw [>->] (S'2) edge[] (S);
	\draw [->] (Y) edge[white,line width=3.5pt] (S'1);
	\draw [->] (Y) edge[] (S'1);
\end{tikzpicture}
\]
As usual, composition of morphisms is by pushout. Given vertically composable 2-morphisms
	\[
	\begin{tikzpicture}
		\node (B) at (2,1) {$\ell$};
		\node (A') at (1,0) {$x$};
		\node (B') at (2,0) {$y'$};
		\node (C') at (3,0) {$z$};
		\node (E) at (2,-1) {$y$};
		\node (B'') at (7,1) {$y$};
		\node (A''') at (6,0) {$x$};
		\node (B''') at (7,0) {$y''$};
		\node (C''') at (8,0) {$z$};
		\node (E'') at (7,-1) {$r$};
		\node at (4.5,0) { and };
		\path[->,font=\scriptsize,>=angle 90]
               (C''') edge node {$$} (E'')
               (A''') edge node {$$} (E'')
               (C''') edge node {$$} (B'')
			(A''') edge node {$$} (B'')
               (C') edge node {$$} (E)
               (A') edge node {$$} (E)
               (C') edge node [above]{$$}(B)
               (A') edge node[above]{$$}(B)
               (A') edge node[above]{$$}(B')
			(C')edge node[above]{$$}(B')
			(B') edge[>->] node[left]{$$} (B)
			(B') edge[>->] node[right]{$$} (E)
			(A''')edge node[above]{$$}(B''')
			(C''')edge node[above]{$$}(B''')
			(B''') edge[>->] node[left]{$$} (B'')
			(B''') edge[>->] node[right]{$$} (E'');
	\end{tikzpicture}
	\]
their vertical composite is given by
\[
	\begin{tikzpicture}
		\node (B) at (2,1) {$\ell$};
		\node (A') at (0,0) {$x$};
		\node (B') at (2,0) {$y' \times_{y} y''$};
		\node (C') at (4,0) {$z$};
		\node (E) at (2,-1) {$r$};
		\path[->,font=\scriptsize,>=angle 90]
                     (C') edge node {$$} (E)
                     (A') edge node {$$} (E)
                     (C') edge node [above]{$$}(B)
                     (A') edge node[above]{$$}(B)
                     (A') edge node[above]{$$}(B')
		(C')edge node[above]{$$}(B')
		(B') edge[>->] node[left]{$$} (B)
		(B') edge[>->] node[right]{$$} (E);
	\end{tikzpicture}
	\]
The legs of the inner span are monic because pullbacks preserve monomorphisms. Given two horizontally composable 2-morphisms
\[
	\begin{tikzpicture}
		\node (B) at (2,1) {$y$};
		\node (A') at (1,0) {$x$};
		\node (B') at (2,0) {$y^\prime$};
		\node (C') at (3,0) {$z$};
		\node (E) at (2,-1) {$y''$};

		\node (B'') at (7,1) {$w$};

		\node (A''') at (6,0) {$z$};
		\node (B''') at (7,0) {$w'$};
		\node (C''') at (8,0) {$v$};

		\node (E'') at (7,-1) {$w''$};
		
		\node at (4.5,0) { and };
		\path[->,font=\scriptsize,>=angle 90]
                     (C''') edge node {$$} (E'')
                     (A''') edge node {$$} (E'')
                     (C''') edge node {$$} (B'')
		(A''') edge node {$$} (B'')
                     (C') edge node {$$} (E)
                     (A') edge node {$$} (E)
                     (C') edge node [above]{$$}(B)
                     (A') edge node[above]{$$}(B)
                     (A') edge node[above]{$$}(B')
		(C')edge node[above]{$$}(B')
		(B') edge[>->] node[left]{$$} (B)
		
		(B') edge[>->] node[right]{$$} (E)

		(A''')edge node[above]{$$}(B''')
		(C''')edge node[above]{$$}(B''')

		(B''') edge[>->] node[left]{$$} (B'')

		(B''') edge[>->] node[right]{$$} (E'');
		
	\end{tikzpicture}
	\]
their horizontal composite is given by
\[
	\begin{tikzpicture}
		\node (B) at (2,1) {$y+_z w$};
		\node (A') at (0,0) {$x$};
		\node (B') at (2,0) {$y^\prime +_z w^\prime$};
		\node (C') at (4,0) {$v$};
		\node (E) at (2,-1) {$y'' +_z w''$};

		\path[->,font=\scriptsize,>=angle 90]
                   
                     (C') edge node {$$} (E)
                     (A') edge node {$$} (E)
                     (C') edge node [above]{$$}(B)
                     (A') edge node[above]{$$}(B)
                     (A') edge node[above]{$$}(B')
		(C')edge node[above]{$$}(B')
		(B') edge[>->] node[left]{$$} (B)
		
		(B') edge[>->] node[right]{$$} (E);

	\end{tikzpicture}
	\]
where the legs of the inner span are monic by a previous result of the first author \cite[Lem.~2.2]{Cic}. The monoidal structure is given by coproducts, and this again preserves the legs of the inner spans being monic.
	\[
	\begin{tikzpicture}
		%
		%
		\node (B) at (2,1) {$y$};
		\node (A') at (1,0) {$x$};
		\node (B') at (2,0) {$y^\prime$};
		\node (C') at (3,0) {$z$};
		\node (E) at (2,-1) {$y''$};
		%
		%
		\node (B'') at (5,1) {$w$};
		\node (A''') at (4,0) {$v$};
		\node (B''') at (5,0) {$w'$};
		\node (C''') at (6,0) {$u$};
		\node (E'') at (5,-1) {$w''$};
		%
		%
		\node at (3.5,0) {$+$};
        \node at (6.5,0) {$=$};
        %
        %
        \node (A) at (7.5,0) {$x+v$};
        \node (C) at (9.5,1) {$y+w$};
        \node (D) at (9.5,0) {$y^\prime + w^\prime$};
        \node (F) at (9.5,-1) {$y'' + w''$};
        \node (G) at (11.5,0) {$z+u$};
		\path[->,font=\scriptsize,>=angle 90]
                     (A) edge node {$$} (C)
                     (A) edge node {$$} (D)
                     (A) edge node {$$} (F)
                     (D) edge [>->] node {$$} (C)
                     (D) edge [>->] node {$$} (F)
                     (G) edge node {$$} (C)
                     (G) edge node {$$} (D)
                     (G) edge node {$$} (F)
                     (C''') edge node {$$} (E'')
                     (A''') edge node {$$} (E'')
                     (C''') edge node {$$} (B'')
					(A''') edge node {$$} (B'')
                     (C') edge node {$$} (E)
                     (A') edge node {$$} (E)
                     (C') edge node [above]{$$}(B)
                     (A') edge node[above]{$$}(B)
                     (A') edge node[above]{$$}(B')
					(C')edge node[above]{$$}(B')
					(B') edge[>->] node[left]{$$} (B)		
					(B') edge[>->] node[right]{$$} (E)		
					(A''')edge node[above]{$$}(B''')
					(C''')edge node[above]{$$}(B''')		
					(B''') edge[>->] node[left]{$$} (B'')		
					(B''') edge[>->] node[right]{$$} (E'');		
	\end{tikzpicture}
	\]
This bicategory is explored further in Section 
	\ref{subsec:Replacing a node in a network} 
where we take $\cat{T}$ to be the topos of directed graphs.

\section{Double categories and duality} 
\label{sec:DoubleCategories}

Bicategories are nice, 
but symmetric monoidal bicategories are much nicer.
Unfortunately, checking the coherence conditions 
for a symmetric monoidal bicategory 
is daunting due to the sheer number of them.  
However, using a result of Shulman 
	\cite{Shul}, 
we can circumvent checking these conditions by
promoting our bicategories to double categories 
and showing that the double categories are symmetric monoidal.  
This involves proving that a pair of categories are symmetric monoidal, 
which is a much more manageable task.  
Even better than symmetric monoidal bicategories 
are those that are compact closed. 
The notion of compact closedness
we consider is given by Stay \cite{Stay}.
After summarizing Shulman's and Stay's work,
we use their machinery
to show that $\bimonspcsp{T}$
is not only symmetric monoidal, but also compact closed.

\subsection{Monoidal bicategories from monoidal double categories}
\label{subsec:DoubleCategories}

Double categories, 
or pseudo double categories to be precise, 
have been studied by Fiore \cite{Fiore} 
and Par\'{e} and Grandis
	\cite{Gran} among others. 
Before giving a formal definition, 
it is helpful to have the following picture in mind. 
A double category has 2-morphisms that look like this:

\begin{equation}
\label{diag:DblCatSquare}
\raisebox{-0.5\height}{
	\begin{tikzpicture}
	\node (A) at (0,1) {$A$};
	\node (B) at (1,1) {$B$};
	\node (C) at (0,0) {$C$};
	\node (D) at (1,0) {$D$};
	\path[->,font=\scriptsize,>=angle 90]
	(A) edge node[above]{$M$} (B)
	(A) edge node[left]{$f$} (C)
	(B) edge node[right]{$g$} (D)
	(C) edge node[below]{$N$} (D);
	\draw (0.5,.925) -- (0.5,1.075);
	\draw (0.5,-.075) -- (0.5,.075);
	\node () at (0.5,0.5) {\scriptsize{$\Downarrow a$}};
	\end{tikzpicture}
}
\end{equation}
We call $A$, $B$, $C$ and $D$ \textbf{objects} or \textbf{0-cells}, 
$f$ and $g$ \textbf{vertical 1-morphisms}, 
$M$ and $N$ \textbf{horizontal 1-morphisms}, 
and $a$ a \textbf{2-morphism}. 
Note that vertical 1-morphisms go between objects
and 2-morphisms go between horizontal 1-morphisms. 
In our definitions, we denote a double category
with a bold font `$\dblcat{D}$' 
either as a stand-alone letter 
or as the first letter in a longer name.

%
\begin{defn}
	\label{def:DoubleCategory}
	A \textbf{pseudo double category} $\dblcat{D}$, 
	or simply \textbf{double category}, consists of 
	a category of objects $\dblcat{D}_{0}$ and 
	a category of arrows $\dblcat{D}_{1}$ together
	with the following functors
	\begin{equation*}
		\begin{split}
			U & 
				\from \dblcat{D}_{0} \to \dblcat{D}_{1}, \\
			S,T & 
				\from \dblcat{D}_{1} \rightrightarrows \dblcat{D}_{0}, \t{ and} \\
			\odot & 
				\from \dblcat{D}_{1} \times_{\dblcat{D}_{0}} \dblcat{D}_{1} 
					\to \dblcat{D}_{1}
		\end{split}
	\end{equation*}
	where the pullback 
		$\dblcat{D}_{1} \times_{\dblcat{D}_{0}} \dblcat{D}_{1}$ 
	is taken over $S$ and $T$.  
	These functors satisfy the equations
	\begin{equation*}
		\begin{split}
			S(U_{A}) = A & = T(U_{A}) \\
			S(M \odot N) & = SN \\
			T(M \odot N) & = TM. 
	\end{split}
	\end{equation*}
	This also comes equipped with natural isomorphisms
	\begin{equation*}
		\begin{split}
		\alpha & \from (M \odot N) \odot P \to M \odot (N \odot P)\\
		\lambda & \from U_{B} \odot M \to M\\
		\rho & \from M \odot U_{A} \to M
	\end{split}
	\end{equation*}
	such that 
		$S(\alpha)$, 
		$S(\lambda)$, 
		$S(\rho)$, 
		$T(\alpha)$, 
		$T(\lambda)$, and 
		$T(\rho)$ 
	are each identities and that 
	the coherence axioms of a monoidal category are satisfied. 
	
	To match this definition with the more intuitive terms used, we say
	\textbf{vertical 1-morphisms} for the $\dblcat{D}_{0}$-morphisms,
	\textbf{horizontal 1-morphisms}\footnote{Sometimes the term \textbf{horizontal 1-cell} is used for these \cite{Shul}, and for good reason. A $(n \times 1)$-category consists of categories $\mathbf{D_i}$ for $0 \leq i \leq n$ where the objects of $\mathbf{D_i}$ are $i$-cells and the morphisms of $\mathbf{D_i}$ are vertical $i+1$-morphisms. A double category is then just a $(1 \times 1)$-category. From this perspective, `cells' are always objects with morphisms going between them.} for the $\dblcat{D}_{1}$-objects, and
	\textbf{2-morphisms} for the $\dblcat{D}_{1}$-morphisms. 
	As for notation, we write vertical and horizontal morphisms 
	with the arrows $\to$ and $\hto$, respectively, and 
	2-morphisms we draw as in 
		\eqref{diag:DblCatSquare}.
\end{defn}

An equivalent perspective to this definition 
is that a \emph{pseudo} double category is 
a category `weakly internal' to $\cat{Cat}$, whereas a category internal to $\mathbf{Cat}$ is an ordinary double category, meaning that the natural isomorphisms above are identities.

To bypass checking that a bicategory is monoidal, 
we instead need to check 
that a certain double category is monoidal. 
To define a monoidal double category, however, 
we need the notion of a \textbf{globular 2-morphism}.  
This is a 2-morphism whose 
source and target vertical 1-morphisms are identities.

%
\begin{defn}
	\label{def:MonoidalDoubleCategory}
	A \textbf{monoidal double category} is 
	a double category $\dblcat{D}$ 
	such that:
	\begin{enumerate}
		\item $\dblcat{D}_{0}$ and $\dblcat{D}_{1}$ 
			are both monoidal categories.
		\item If $I$ is the monoidal unit of $\dblcat{D}_{0}$, 
			then $U_I$ is the monoidal unit of $\dblcat{D}_{1}$.
		\item The functors $S$ and $T$ are strict monoidal and 
			preserve the associativity and unit constraints.
		\item There are globular 2-isomorphisms
			\[ 
				\mathfrak{x} \from 
					(M_1 \otimes N_1) \odot (M_2 \otimes N_2) 
					\to 
					(M_1\odot M_2) \otimes (N_1\odot N_2)
			\]
			and
			\[
				\mathfrak{u} \from 
				U_{A \otimes B} 
				\to 
				(U_A \otimes U_B)
			\]
			such that the following diagrams commute:
		\item \label{diag:MonDblCat}
			The following diagrams commute expressing the constraint data for the double functor $\otimes$.
			\[
			\begin{tikzpicture}[scale=0.9]
				\node (A) at (0,3) {\footnotesize{
							$((M_1\otimes N_1)\odot (M_2\otimes N_2)) \odot (M_3\otimes N_3)$}
				};
				\node (B) at (9,3) {\footnotesize{
						$((M_1\odot M_2)\otimes (N_1\odot N_2)) \odot (M_3\otimes N_3) $}
				};
				\node (A') at (0,1.5) {\footnotesize{
						$(M_1\otimes N_1)\odot ((M_2\otimes N_2) \odot (M_3\otimes N_3)) $}
				};
				\node (B') at (9,1.5) {\footnotesize{
						$((M_1\odot M_2)\odot M_3) \otimes ((N_1\odot N_2)\odot N_3)$}
				};
				\node (A'') at (0,0) {\footnotesize{
						$(M_1\otimes N_1) \odot ((M_2\odot M_3) \otimes (N_2\odot N_3))$}
				};
				\node (B'') at (9,0) {\footnotesize{
						$(M_1\odot (M_2\odot M_3)) \otimes (N_1\odot (N_2\odot N_3))$}
				};
			\path[->,font=\scriptsize]
				(A) edge node[left]{$\alpha$} (A')
				(A') edge node[left]{$1 \odot \mathfrak{x}$} (A'')
				(B) edge node[right]{$\mathfrak{x}$} (B')
				(B') edge node[right]{$\alpha \otimes \alpha$} (B'')
				(A) edge node[above]{$\mathfrak{x} \odot 1$} (B)
				(A'') edge node[above]{$\mathfrak{x}$} (B'');
		\end{tikzpicture}
		\]
		\[
		\begin{tikzpicture}[scale=0.9]
			\node (UL) at (0,1.5) {\footnotesize{
					$(M\otimes N) \odot U_{C\otimes D}$}
			};
			\node (LL) at (0,0) {\footnotesize{
					$M\otimes N$}
			};
			\node (UR) at (4.5,1.5) {\footnotesize{
					$(M\otimes N)\odot (U_C\otimes U_D)$}
			};
			\node (LR) at (4.5,0) {\footnotesize{
					$(M\odot U_C) \otimes (N\odot U_D)$}
			};
			\path[->,font=\scriptsize]
				(UL) edge node[above]{$1 \odot \mathfrak{u}$} (UR) 
				(UL) edge node[left]{$\rho$} (LL)
				(LR) edge node[above]{$\rho \otimes \rho$} (LL)
				(UR) edge node[right]{$\mathfrak{x}$} (LR);
		\end{tikzpicture}
		%
		%
		\begin{tikzpicture}[scale=0.9]
			\node (UL) at (0,1.5) {\scriptsize{$U_{A\otimes B}\odot (M\otimes N)$}};
			\node (LL) at (0,0) {\scriptsize{$M\otimes N$}};
			\node (UR) at (4.5,1.5) {\scriptsize{$(U_A\otimes U_B)\odot (M\otimes N)$}};
			\node (LR) at (4.5,0) {\scriptsize{$(U_A \odot M) \otimes (U_B\odot N)$}};
			\path[->,font=\scriptsize]
				(UL) edge node[above]{$\mathfrak{u} \odot 1$} (UR) 
				(UL) edge node[left]{$\lambda$} (LL)
				(LR) edge node[above]{$\lambda \otimes \lambda$} (LL)
				(UR) edge node[right]{$\mathfrak{x}$} (LR);
		\end{tikzpicture}
		\]
		\item The following diagrams commute expressing 
		the associativity isomorphism for $\otimes$ is a transformation of double categories.
		\[
		\begin{tikzpicture}[scale=0.9]
			\node (A) at (0,3) {\footnotesize{
					$((M_1\otimes N_1)\otimes P_1) \odot ((M_2\otimes N_2)\otimes P_2)$}
			};
			\node (B) at (9,3) {\footnotesize{
					$(M_1\otimes (N_1\otimes P_1)) \odot (M_2\otimes (N_2\otimes P_2))$}
			};
			\node (A') at (0,1.5) {\footnotesize{
					$((M_1\otimes N_1) \odot (M_2\otimes N_2)) \otimes (P_1\odot P_2)$}
			};
			\node (B') at (9,1.5) {\footnotesize{
					$(M_1\odot M_2) \otimes ((N_1\otimes P_1)\odot (N_2\otimes P_2))$}
			};
			\node (A'') at (0,0) {\footnotesize{
					$((M_1\odot M_2) \otimes(N_1\odot N_2)) \otimes (P_1\odot P_2)$}
			};
			\node (B'') at (9,0) {\footnotesize{
					$(M_1\odot M_2) \otimes ((N_1\odot N_2)\otimes (P_1\odot P_2))$}
			};
			\path[->,font=\scriptsize]
				(A) edge node[left]{$\mathfrak{x}$} (A')
				(A') edge node[left]{$\mathfrak{x} \otimes 1$} (A'')
				(B) edge node[right]{$\mathfrak{x}$} (B')
				(B') edge node[right]{$1 \otimes \mathfrak{x}$} (B'')
				(A) edge node[above]{$a \odot a$} (B)
				(A'') edge node[above]{$a$} (B'');
		\end{tikzpicture}
		\]
		\[
		\begin{tikzpicture}
			\node (A) at (0,3) {\footnotesize{$U_{(A\otimes B)\otimes C}$}};
			\node (B) at (4,3) {\footnotesize{$U_{A\otimes (B\otimes C)} $}};
			\node (A') at (0,1.5) {\footnotesize{$U_{A\otimes B} \otimes U_C $}};
			\node (B') at (4,1.5) {\footnotesize{$U_A\otimes U_{B\otimes C}$}};
			\node (A'') at (0,0) {\footnotesize{$(U_A\otimes U_B)\otimes U_C$}};
			\node (B'') at (4,0) {\footnotesize{$U_A\otimes (U_B\otimes U_C) $}};
			\path[->,font=\scriptsize]
				(A) edge node[left]{$\mathfrak{u}$} (A')
				(A') edge node[left]{$\mathfrak{u} \otimes 1$} (A'')
				(B) edge node[right]{$\mathfrak{u}$} (B')
				(B') edge node[right]{$1 \otimes \mathfrak{u}$} (B'')
				(A) edge node[above]{$U_{a}$} (B)
				(A'') edge node[above]{$a$} (B'');
		\end{tikzpicture}
		\]
		\item The following diagrams commute expressing that 
		the unit isomorphisms for $\otimes$ are transformations of double categories. 
		\[
		\begin{tikzpicture}
			\node (A) at (0,1.5) {\footnotesize{$(M\otimes U_I)\odot (N\otimes U_I)$}};
			\node (A') at (0,0) {\footnotesize{$M\odot N $}};
			\node (B) at (4,1.5) {\footnotesize{$(M\odot N)\otimes (U_I \odot U_I) $}};
			\node (B') at (4,0) {\footnotesize{$(M\odot N)\otimes U_I $}};
			\path[->,font=\scriptsize]
				(A) edge node[left]{$r \odot r$} (A')
				(A) edge node[above]{$\mathfrak{x}$} (B)
				(B) edge node[right]{$1 \otimes \rho$} (B')
				(B') edge node[above]{$r$} (A');
		\end{tikzpicture}
		\quad
		\begin{tikzpicture}
			\node (A) at (0,0.75) {\footnotesize{$U_{A\otimes I} $}};
			\node (B) at (1.5,1.5) {\footnotesize{$U_A\otimes U_I $}};
			\node (B') at (1.5,0) {\footnotesize{$U_A$}};
			\path[->,font=\scriptsize]
				(A) edge node[above]{$\mathfrak{u}$} (B)
				(A) edge node[below]{$U_{r}$} (B')
				(B) edge node[right]{$r$} (B');
		\end{tikzpicture}
		\]
		\[
		\begin{tikzpicture}
			\node (A) at (0,1.5) {\footnotesize{$(U_I\otimes M)\odot (U_I\otimes N)$}};
			\node (A') at (0,0) {\footnotesize{$M\odot N$}};
			\node (B) at (4,1.5) {\footnotesize{$(U_I \odot U_I) \otimes (M\odot N)$}};
			\node (B') at (4,0) {\footnotesize{$U_I\otimes (M\odot N) $}};
			\path[->,font=\scriptsize]
				(A) edge node[left]{$\ell \odot \ell$} (A')
				(A) edge node[above]{$\mathfrak{x}$} (B)
				(B) edge node[right]{$\lambda \otimes 1$} (B')
				(B') edge node[above]{$\ell$} (A');
		\end{tikzpicture}
		\quad
		\begin{tikzpicture}
			\node (A) at (0,0.75) {\footnotesize{$U_{I\otimes A}$}};
			\node (B) at (1.5,1.5) {\footnotesize{$U_I\otimes U_A$}};
			\node (B') at (1.5,0) {\footnotesize{$U_A$}};
			\path[->,font=\scriptsize]
				(A) edge node[above]{$\mathfrak{u}$} (B)
				(A) edge node[below]{$U_{\ell}$} (B')
				(B) edge node[right]{$\ell$} (B');
		\end{tikzpicture}
		\]
		\newcounter{mondbl}
		\setcounter{mondbl}{\value{enumi}}
	\end{enumerate}
	A \textbf{braided monoidal double category} 
	is a monoidal double category 
	such that:
	\begin{enumerate}
		\setcounter{enumi}{\value{mondbl}}
		\item $\dblcat{D}_{0}$ and $\dblcat{D}_{1}$ are braided monoidal categories.
		\item The functors $S$ and $T$ are strict braided monoidal functors.
		\item The following diagrams commute expressing that the braiding is a transformation of double categories.
		\[
		\begin{tikzpicture}
			\node (A) at (0,1.5) {\footnotesize{$(M_1 \odot M_2) \otimes (N_1 \odot N_2)$}};
			\node (A') at (0,0) {\footnotesize{$(M_1\otimes N_1) \odot (M_2\otimes N_2)$}};
			\node (B) at (5,1.5) {\footnotesize{$(N_1\odot N_2) \otimes (M_1 \odot M_2)$}};
			\node (B') at (5,0) {\footnotesize{$(N_1 \otimes M_1) \odot (N_2 \otimes M_2)$}};
			\path[->,font=\scriptsize]
				(A) edge node[left]{$\mathfrak{x}$} (A')
				(A) edge node[above]{$\mathfrak{s}$} (B)
				(B) edge node[right]{$\mathfrak{x}$} (B')
				(A') edge node[above]{$\mathfrak{s} \odot \mathfrak{s}$} (B');
		\end{tikzpicture}
		\quad
		\begin{tikzpicture}
			\node (A) at (0,1.5) {\footnotesize{$U_A \otimes U_B$}};
			\node (A') at (0,0) {\footnotesize{$U_B\otimes U_A$}};
			\node (B) at (2,1.5) {\footnotesize{$U_{A\otimes B} $}};
			\node (B') at (2,0) {\footnotesize{$U_{B\otimes A}$}};
			\path[->,font=\scriptsize]
				(A) edge node[left]{$\mathfrak{s}$} (A')
				(B) edge node[above]{$\mathfrak{u}$} (A)
				(B) edge node[right]{$U_\mathfrak{s}$} (B')
				(B') edge node[above]{$\mathfrak{u}$} (A');
		\end{tikzpicture}
		\]
		\setcounter{mondbl}{\value{enumi}}
	\end{enumerate}
	Finally, a \textbf{symmetric monoidal double category} 
	is a braided monoidal double category $\mathbb{D}$ such that
	\begin{enumerate}
		\setcounter{enumi}{\value{mondbl}}
		\item $\dblcat{D}_{0}$ and $\dblcat{D}_{1}$ are symmetric monoidal.
	\end{enumerate}
\end{defn}

%
\begin{defn}
	\label{def:CompanionConjoint}
	Let $\mathbb{D}$ be a double category and 
	$f \from A\to B$ a vertical 1-morphism.  
	A \textbf{companion} of $f$ is a horizontal 1-morphism
		$\widehat{f} \from A \hto B$ 
	together with 2-morphisms
	\[
	\raisebox{-0.5\height}{
	\begin{tikzpicture}
		\node (A) at (0,1) {$A$};
		\node (B) at (1,1) {$B$};
		\node (A') at (0,0) {$B$};
		\node (B') at (1,0) {$B$};
		\path[->,font=\scriptsize,>=angle 90]
			(A) edge node[above]{$\widehat{f}$} (B)
			(A) edge node[left]{$f$} (A')
			(B) edge node[right]{$B$} (B')
			(A') edge node[below]{$U_B$} (B');
		\draw (0.5,.925) -- (0.5,1.075);
		\draw (0.5,-.075) -- (0.5,.075);
		\node () at (0.5,0.5) {\scriptsize{$\Downarrow$}};
	\end{tikzpicture}
	}
	\quad \text{ and } \quad
	\raisebox{-0.5\height}{
	\begin{tikzpicture}
		\node (A) at (0,1) {$A$};
		\node (B) at (1,1) {$A$};
		\node (A') at (0,0) {$A$};
		\node (B') at (1,0) {$B$};
		\path[->,font=\scriptsize,>=angle 90]
			(A) edge node[above]{$U_A$} (B)
			(A) edge node[left]{$A$} (A')
			(B) edge node[right]{$f$} (B')
			(A') edge node[below]{ $\widehat{f}$} (B');
		\draw (0.5,.925) -- (0.5,1.075);
		\draw (0.5,-.075) -- (0.5,.075);
		\node () at (0.5,0.5) {\scriptsize{$\Downarrow$}};
	\end{tikzpicture}
	}
	\]
	such that the following equations hold:
	\begin{equation}
	\label{eq:CompanionEq}
	\raisebox{-0.5\height}{
	\begin{tikzpicture}
		\node (A) at (0,2) {$A$};
		\node (B) at (1.1,2) {$A$};
		\node (A') at (0,1) {$A$};
		\node (B') at (1.1,1) {$B$};
		\node (A'') at (0,0) {$B$};
		\node (B'') at (1.1,0) {$B$};
		\path[->,font=\scriptsize,>=angle 90]
			(A) edge node[left]{$A$} (A')
			(A') edge node[left]{$f$} (A'')
			(B) edge node[right]{$f$} (B')
			(B') edge node[right]{$B$} (B'')
			(A) edge node[above]{$U_A$} (B)
			(A') edge  (B')
			(A'') edge node[below]{$U_B$} (B'');
		\draw (0.5,1.925) -- (0.5,2.075);
		\draw[line width=2mm,white] (0.5,.925) -- (0.5,1.075);
		\draw (0.5,-.075) -- (0.5,.075);
		\node () at (0.5,0.5) {\scriptsize{$\Downarrow$}};
		\node () at (0.5,1.5) {\scriptsize{$\Downarrow$}};
		\node () at (0.5,1) {\scriptsize $\widehat{f}$};
	\end{tikzpicture}
	}
	\raisebox{-0.5\height}{=}
	\raisebox{-0.5\height}{
	\begin{tikzpicture}
		\node (A) at (0,1) {$A$};
		\node (B) at (1,1) {$A$};
		\node (A') at (0,0) {$B$};
		\node (B') at (1,0) {$B$};
		\path[->,font=\scriptsize,>=angle 90]
		(A) edge node[left]{$f$} (A')
		(B) edge node[right]{$f$} (B')
		(A) edge node[above]{$U_A$} (B)
		(A') edge node[below]{$U_B$} (B');
		\draw (0.5,.925) -- (0.5,1.075);
		\draw (0.5,-.075) -- (0.5,.075);
		\node () at (0.5,0.5) {\scriptsize{$\Downarrow U_f$}};
	\end{tikzpicture}
	}
	\raisebox{-0.5\height}{\text{   and   }}
	\raisebox{-0.5\height}{
	\begin{tikzpicture}
		\node (A) at (0,1) {$A$};
		\node (A') at (0,0) {$A$};
		\node (B) at (1,1) {$A$};
		\node (B') at (1,0) {$B$};
		\node (C) at (2,1) {$B$};
		\node (C') at (2,0) {$B$};
		\path[->,font=\scriptsize,>=angle 90]
			(A) edge node[left]{$A$} (A')
			(B) edge node[left]{$f$} (B')
			(C) edge node[right]{$B$} (C')
			(A) edge node[above]{$U_A$} (B)
			(B) edge node[above]{$\widehat{f}$} (C)
			(A') edge node[below]{$\widehat{f}$} (B')
			(B') edge node[below]{$U_B$} (C');
		\draw (1.5,0.925) -- (1.5,1.075);
		\draw (1.5,0.925) -- (1.5,1.075);
		\draw (0.5,.925) -- (0.5,1.075);
		\draw (0.5,-.075) -- (0.5,.075);
		\node () at (0.5,0.5) {\scriptsize{$\Downarrow$}};
		\node () at (1.5,0.5) {\scriptsize{$\Downarrow$}};
	\end{tikzpicture}
	}
	\raisebox{-0.5\height}{=}
	\raisebox{-0.5\height}{
	\begin{tikzpicture}
		\node (A) at (0,1) {$A$};
		\node (B) at (1,1) {$B$};
		\node (A') at (0,0) {$A$};
		\node (B') at (1,0) {$B$};
		\path[->,font=\scriptsize,>=angle 90]
			(A) edge node[left]{$A$} (A')
			(B) edge node[right]{$B$} (B')
			(A) edge node[above]{$\widehat{f}$} (B)
			(A') edge node[below]{$\widehat{f}$} (B');
		\draw (0.5,.925) -- (0.5,1.075);
		\draw (0.5,-.075) -- (0.5,.075);
		\node () at (0.5,0.5) {\scriptsize{$\Downarrow \id_{\widehat{f}}$}};
	\end{tikzpicture}
	}
	\end{equation}
	A \textbf{conjoint} of $f$, denoted 
		$\check{f} \from B \hto A$, 
	is a companion of $f$ in the double category 
		$\dblcat{D}^{h\cdot\mathrm{op}}$ 
	obtained by reversing the horizontal 1-morphisms, 
	but \emph{not} the vertical 1-morphisms.
\end{defn}

%
\begin{defn}
	\label{def:Fibrant}
	We say that a double category is \textbf{fibrant} 
	if every vertical 1-morphism has 
	both a companion and a conjoint. 
	If every \emph{invertible} vertical 1-morphism 
	has both a companion and a conjoint, 
	then we say the double category is \textbf{isofibrant}.
\end{defn}

The final piece we need to present 
the main theorem of this section is the following.  
Given a double category $\dblcat{D}$, 
the \textbf{horizontal edge bicategory} 
	$H(\dblcat{D})$ 
of $\dblcat{D}$ is the bicategory whose 
objects are those of $\dblcat{D}$, 
morphisms are horizontal 1-morphisms of $\dblcat{D}$, 
and $2$-morphisms are the globular 2-morphisms.

\begin{thm}[Shulman {\cite[Theorem 5.1]{Shul}}]
	\label{thm:DoubleGivesBi}
	Let $\dblcat{D}$ be an isofibrant symmetric monoidal double category. 
	Then $H(\dblcat{D})$ is a symmetric monoidal bicategory.  
\end{thm}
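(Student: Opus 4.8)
Because this is precisely Theorem~5.1 of \cite{Shul}, the plan is not to give an independent argument but to recall the shape of Shulman's proof and the role each hypothesis plays. First I would check that $H(\dblcat{D})$ is a bicategory at all: horizontal composition is $\odot$ restricted to globular 2-morphisms, the associator and unitors are the components of $\alpha$, $\lambda$, $\rho$ from Definition~\ref{def:DoubleCategory} (which \emph{are} globular, since $S$ and $T$ carry them to identities), and the pentagon and triangle identities for $H(\dblcat{D})$ are exactly the monoidal-category coherence axioms that $(\dblcat{D}_1,\odot,\alpha,\lambda,\rho)$ already satisfies. No fibrancy is needed for this part.

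Next I would transport the monoidal structure. On objects and on horizontal 1-morphisms the tensor is read off from the monoidal structures on $\dblcat{D}_0$ and $\dblcat{D}_1$, and the data making $\otimes$ a pseudofunctor $H(\dblcat{D})\times H(\dblcat{D})\to H(\dblcat{D})$ are precisely the globular 2-isomorphisms $\mathfrak{x}$ and $\mathfrak{u}$; the commuting diagrams in Definition~\ref{def:MonoidalDoubleCategory} are exactly what says that $(\otimes,\mathfrak{x},\mathfrak{u})$ is a pseudofunctor and that it, together with the braiding $\mathfrak{s}$, behaves coherently. The crux of the proof---and the one place isofibrancy is essential---is producing the associativity and unit \emph{equivalences} of the monoidal bicategory: in a monoidal double category the associator $a$ and the left/right unitors of $\otimes$ live in $\dblcat{D}_0$, i.e.\ they are \emph{vertical} 1-morphisms, whereas a monoidal bicategory needs them as horizontal morphisms carrying invertible modifications. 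Since these vertical 1-morphisms are invertible, isofibrancy (Definition~\ref{def:Fibrant}) supplies each with a companion, and the companion is the horizontal 1-morphism one takes as the corresponding component; the universal property of companions and conjoints then generates the naturality 2-cells and the invertible modifications, and the remaining structure cells of the symmetric monoidal bicategory are pasted together out of $\mathfrak{x}$, $\mathfrak{u}$, $\mathfrak{s}$, the globular coherence diagrams of Definition~\ref{def:MonoidalDoubleCategory}, and the companion/conjoint units and counits.

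The last and most laborious step is to verify the full list of coherence axioms of a symmetric monoidal bicategory in the sense of \cite{Stay}: the pentagonator's non-abelian $3$-cocycle equation, the two triangle-type equations, and, in the symmetric case, the hexagon modifications, the syllepsis, and its compatibilities. The hard part is exactly this bookkeeping: each axiom must be reduced, via the companion/conjoint calculus, either to one of the finitely many globular diagrams assumed in Definition~\ref{def:MonoidalDoubleCategory} or to coherence already available in the monoidal categories $\dblcat{D}_0$ and $\dblcat{D}_1$. Shulman carries this out in \cite{Shul}, so we simply cite it here and apply it in Section~\ref{sec:SpansCospans}.
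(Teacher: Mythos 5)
Your outline is an accurate account of Shulman's argument, and it correctly isolates the one place isofibrancy is genuinely needed (turning the invertible vertical constraint 1-morphisms of $\dblcat{D}_0$ into horizontal equivalences via companions). The paper itself offers no independent proof of this statement --- it is cited verbatim from \cite{Shul} --- so your proposal and the paper take the same route: deferring the coherence bookkeeping to Shulman's Theorem 5.1.
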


Thanks to Theorem \ref{thm:DoubleGivesBi}, 
we can show that $\bimonspcsp{T}$
is symmetric monoidal
much more efficiently than
if we were to 
drudge through
all of the axioms.
But before we do this,
we recall the notion of compactness
in a bicategory.

\subsection{Duality in bicategories}
\label{sec:CompactClosed}

In this section, we introduce various 
notions of duality in order to define
`compact closed bicategories' 
as conceived by Stay \cite{Stay}. 
We write $LR$ for the tensor product 
of objects $L$ and $R$ and 
$fg$ for the tensor product 
of morphisms $f$ and $g$.  
This lets us reserve the symbol `$\otimes$' 
for the horizontal composition functor 
of a bicategory.

\begin{defn}
	\label{def:DualPairCat}
	A \textbf{dual pair} in a monoidal category 
	is a tuple $(L,R,e,c)$ with objects $L$ and $R$, 
	called the \textbf{left} and \textbf{right} duals, 
	and morphisms
	\[
		e \from LR \to I 
		\quad \quad 
		c \from I \to RL,
	\]
	called the \textbf{counit} and \textbf{unit}, respectively, such that the following diagams commute.
	\[
	\begin{tikzpicture}
		\node (L1) at (0,1) {$L$};
		\node (L2) at (0,0) {$L$};
		\node (LRL) at (1.5,0.5) {$LRL$};
		\path[->,font=\scriptsize,>=angle 90]
		(L1) edge node[left]{$L$} (L2)
		(L1) edge node[above]{$Lc$} (LRL)
		(LRL) edge node[below]{$eL$} (L2);
	\end{tikzpicture}
	\quad \quad
	\begin{tikzpicture}
		\node (R1) at (0,1) {$R$};
		\node (R2) at (0,0) {$R$};
		\node (RLR) at (1.5,0.5) {$RLR$};
		\path[->,font=\scriptsize,>=angle 90]
		(R1) edge node[left]{$R$} (R2)
		(R1) edge node[above]{$cR$} (RLR)
		(RLR) edge node[below]{$Re$} (R2);
	\end{tikzpicture}	
	\]
\end{defn}

\begin{defn}
	\label{def:DualPairBicat}
	Inside a monoidal bicategory, a  
	\textbf{dual pair} is a tuple 
	$(L,R,e,c,\alpha,\beta)$ with 
	objects $L$ and $R$, 
	morphisms
	\[
		e \from LR \to I 
		\quad \quad 
		c \from I \to RL,
	\]
	and invertible 2-morphisms
	\[
	\begin{tikzpicture}
		\node (L1) at (0,2) {$L$};
		\node (LI) at (0,1) {$LI$};
		\node (LRL1) at (0,0) {$L(RL)$};
		\node (LRL2) at (2,0) {$(LR)L$};
		\node (IL) at (2,1) {$IL$};
		\node (L2) at (2,2) {$L$};
		\path[->,font=\scriptsize,>=angle 90]
			(L1) edge (LI)
			(LI) edge node[left] {$Lc$} (LRL1)
			(LRL1) edge (LRL2)
			(LRL2) edge node[right] {$eL$} (IL)
			(IL) edge (L2)
			(L1) edge node[above] {$L$} (L2);
		\node () at (1,1) {$\Downarrow \alpha$};
	\end{tikzpicture}
	\quad \quad
	\begin{tikzpicture}
		\node (L1) at (0,2) {$R$};
		\node (LI) at (0,1) {$RI$};
		\node (LRL1) at (0,0) {$(RL)R$};
		\node (LRL2) at (2,0) {$R(LR)$};
		\node (IL) at (2,1) {$RI$};
		\node (L2) at (2,2) {$R$};
		\path[->,font=\scriptsize,>=angle 90]
			(L1) edge (LI)
			(LI) edge node[left] {$cR$} (LRL1)
			(LRL1) edge (LRL2)
			(LRL2) edge node[right] {$Re$} (IL)
			(IL) edge (L2)
			(L1) edge node[above] {$R$} (L2);
		\node () at (1,1) {$\Downarrow \beta$};
	\end{tikzpicture}
	\]
	called \textbf{cusp isomorphisms}.  
	If this data satisfies the swallowtail equations 
	in the sense that the diagrams in Figure 
		\ref{fig:Swallowtail} 
	are identities, 
	then we call the dual pair \textbf{coherent}.
\end{defn}


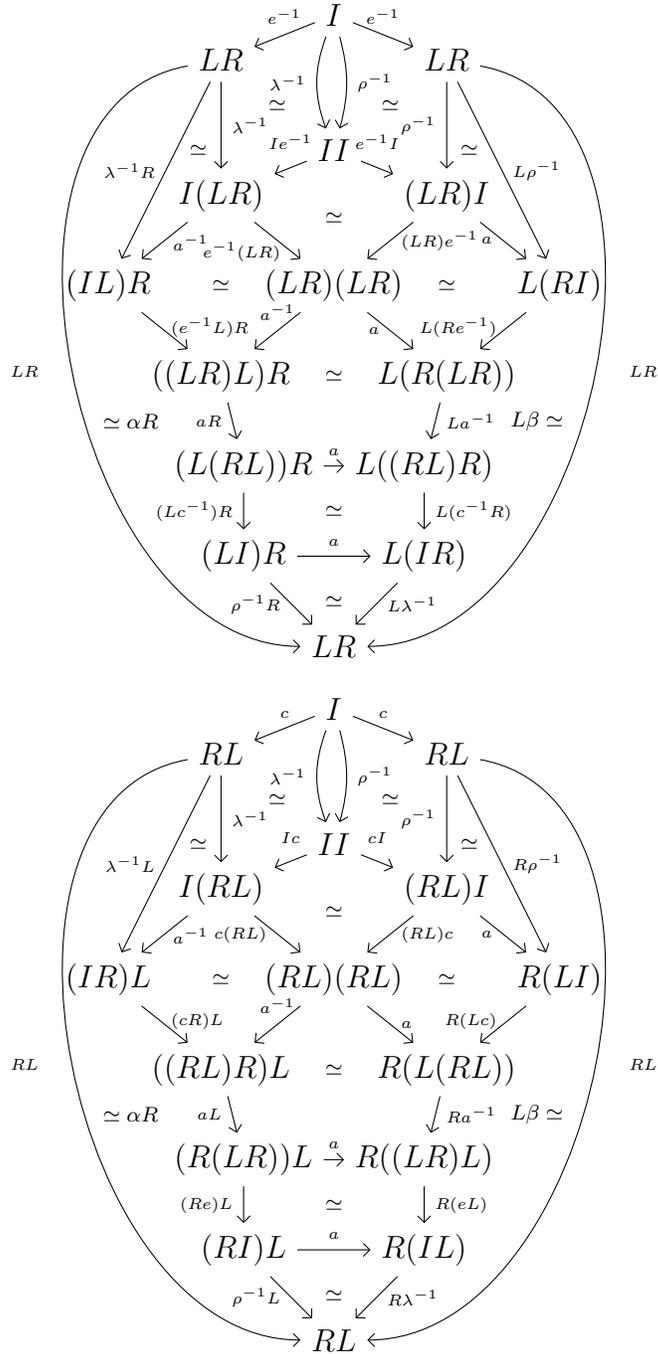
\begin{figure}
	%
	%
	\[
	\begin{tikzpicture}[scale=0.60]
	\node (A) at (0,0) {$I$};
	\node (B) at (-2.5,-1) {$L R$};
	\node (C) at (2.5,-1) {$L R$};
	\node (D) at (0,-3) {$I I$};
	\node (E) at (-2.5,-4) {$I (L R)$};
	\node (F) at (2.5,-4) {$(L R) I$};
	\node (G) at (0,-6) {$(L R) (L R)$};
	\node (H) at (-5,-6) {$(I L) R$};
	\node (I) at (5,-6) {$L (R I)$};
	\node (J) at (-2.5,-8) {$((L R) L)  R$};
	\node (K) at (2.5,-8) {$L (R (L R))$};
	\node (L) at (-2,-10) {$(L (R L)) R$};
	\node (M) at (2,-10) {$L  ((R L) R)$};
	\node (N) at (-2,-12) {$(L I) R$};
	\node (O) at (2,-12) {$L (I R)$};
	\node (P) at (0,-14) {$L  R$};
	%
	%
	\node (Q) at (-1.25,-2) {\scriptsize{$\simeq$}};
	\node (R) at (1.25,-2) {\scriptsize{$\simeq$}};
	\node (S) at (0,-4.5) {\scriptsize{$\simeq$}};
	\node (T) at (-3,-3) {\scriptsize{$\simeq $}};
	\node (U) at (3,-3) {\scriptsize{$\simeq$}};
	\node (V) at (-2.5,-6) {\scriptsize{$\simeq$}};
	\node (W) at (2.5,-6) {\scriptsize{$\simeq$}};
	\node (X) at (0,-8) {\scriptsize{$\simeq$}};
	\node (Y) at (0,-11) {\scriptsize{$\simeq$}};
	\node (Z) at (0,-13) {\scriptsize{$\simeq$}};
	\node (A1) at (-4.5,-9) {\scriptsize{$\simeq \alpha R$}};
	\node (A2) at (4.5,-9) {\scriptsize{$L \beta \simeq$}};
	\path[->,font=\tiny,>=angle 90]
	(A) edge node[above]{$e^{-1}$} (B)
	(A) edge node[above]{$e^{-1}$} (C)
	(A) edge[out=-110,in=110] node[left]{$\lambda^{-1}$} (D)
	(A) edge[out=-75,in=75] node[right]{$\rho^{-1}$} (D)
	(B) edge node[right]{$\lambda^{-1}$}(E)
	(C) edge node[left]{$\rho^{-1}$} (F)
	(D) edge node[left=0.2cm,above=0.1cm]{$I e^{-1}$} (E)
	(D) edge node[left=0.2cm,above=0.1cm]{$e^{-1} I$} (F)
	(E) edge node[below,left,pos=0.8]{$e^{-1} (L R)$} (G)
	(F) edge node[below,right]{$(L R) e^{-1}$} (G)
	(B) edge node[above,left]{$\lambda^{-1} R$} (H)
	(C) edge node[above,right]{$L \rho^{-1}$} (I)
	(E) edge node[below,right,pos=0.55]{$a^{-1}$} (H)
	(F) edge node[below,left]{$a$} (I)
	(H) edge node[above,right,pos=0.4]{$(e^{-1} L) R$} (J)
	(G) edge node[above]{$a^{-1}$} (J)
	(I) edge node[above,left,pos=0.4]{$L (R e^{-1})$} (K)
	(G) edge node[below,left]{$a$} (K)
	(J) edge node[left]{$a R$} (L)
	(K) edge node[right]{$L a^{-1}$} (M)
	(L) edge node[above]{$a$} (M)
	(L) edge node[left]{$(L  c^{-1}) R$} (N)
	(M) edge node[right]{$L (c^{-1} R)$} (O)
	(N) edge node[above]{$a$} (O)
	(N) edge node[below,left]{$\rho^{-1} R$} (P)
	(O) edge node[below,right]{$L \lambda^{-1}$} (P)
	(B) edge [out=-170,in=-180] node [below=0.3cm,left=0.3cm] {$L R$} (P)
	(C) edge [in=-360,out=-370,] node [below=0.3cm,right=0.3cm] {$LR$} (P);
	\end{tikzpicture}
	\]
	%
	%
	\[
	\begin{tikzpicture}[scale=0.60]
	\node (A) at (0,0) {$I$};
	\node (B) at (-2.5,-1) {$RL$};
	\node (C) at (2.5,-1) {$RL$};
	\node (D) at (0,-3) {$I I$};
	\node (E) at (-2.5,-4) {$I (RL)$};
	\node (F) at (2.5,-4) {$(RL) I$};
	\node (G) at (0,-6) {$(RL) (RL)$};
	\node (H) at (-5,-6) {$(I R) L$};
	\node (I) at (5,-6) {$R (L I)$};
	\node (J) at (-2.5,-8) {$((RL) R)  L$};
	\node (K) at (2.5,-8) {$R (L (R L))$};
	\node (L) at (-2,-10) {$(R (L R)) L$};
	\node (M) at (2,-10) {$R  ((L R) L)$};
	\node (N) at (-2,-12) {$(R I) L$};
	\node (O) at (2,-12) {$R (I L)$};
	\node (P) at (0,-14) {$R  L$};
	%
	%
	\node (Q) at (-1.25,-2) {\scriptsize{$\simeq$}};
	\node (R) at (1.25,-2) {\scriptsize{$\simeq$}};
	\node (S) at (0,-4.5) {\scriptsize{$\simeq$}};
	\node (T) at (-3,-3) {\scriptsize{$\simeq $}};
	\node (U) at (3,-3) {\scriptsize{$\simeq$}};
	\node (V) at (-2.5,-6) {\scriptsize{$\simeq$}};
	\node (W) at (2.5,-6) {\scriptsize{$\simeq$}};
	\node (X) at (0,-8) {\scriptsize{$\simeq$}};
	\node (Y) at (0,-11) {\scriptsize{$\simeq$}};
	\node (Z) at (0,-13) {\scriptsize{$\simeq$}};
	\node (A1) at (-4.5,-9) {\scriptsize{$\simeq \alpha R$}};
	\node (A2) at (4.5,-9) {\scriptsize{$L \beta \simeq$}};
	\path[->,font=\tiny,>=angle 90]
	(A) edge[->] node[above]{$c$} (B)
	(A) edge[->] node[above]{$c$} (C)
	(A) edge[->,out=-110,in=110] node[left]{$\lambda^{-1}$} (D)
	(A) edge[->,out=-75,in=75] node[right]{$\rho^{-1}$} (D)
	(B) edge[->] node[right]{$\lambda^{-1}$}(E)
	(C) edge[->] node[left]{$\rho^{-1}$} (F)
	(D) edge[->] node[left=0.2cm,above=0.1cm]{$I c$} (E)
	(D) edge[->] node[left=0.2cm,above=0.1cm]{$c I$} (F)
	(E) edge[->] node[below,left,pos=0.5]{$c (R L)$} (G)
	(F) edge[->] node[below,right]{$(R L) c$} (G)
	(B) edge[->] node[above,left]{$\lambda^{-1} L$} (H)
	(C) edge[->] node[above,right]{$R \rho^{-1}$} (I)
	(E) edge[->] node[below,right,pos=0.55]{$a^{-1}$} (H)
	(F) edge[->] node[below,left]{$a$} (I)
	(H) edge[->] node[above,right,pos=0.4]{$(c R) L$} (J)
	(G) edge[->] node[above]{$a^{-1}$} (J)
	(I) edge[->] node[above,left,pos=0.4]{$R (L c)$} (K)
	(G) edge[->] node[above,right]{$a$} (K)
	(J) edge[->] node[left]{$a L$} (L)
	(K) edge[->] node[right]{$R a^{-1}$} (M)
	(L) edge[->] node[above]{$a$} (M)
	(L) edge[->] node[left]{$(R  e) L$} (N)
	(M) edge[->] node[right]{$R (e L)$} (O)
	(N) edge[->] node[above]{$a$} (O)
	(N) edge[->] node[below,left]{$\rho^{-1} L$} (P)
	(O) edge[->] node[below,right]{$R \lambda^{-1}$} (P)
	(B) edge[->,out=-170,in=-180] node [below=0.3cm,left=0.3cm] {$R L$} (P)
	(C) edge[->,in=-360,out=-370] node [below=0.3cm,right=0.3cm] {$RL$} (P);
	\end{tikzpicture}
	\]
	\caption{The swallowtail diagrams for the (co)unit.}
	\label{fig:Swallowtail}
\end{figure}

Recall that a symmetric monoidal category is called 
\textbf{compact closed} if every object is part of a dual pair. 
We can generalize this idea to bicategories by 
introducing 2-morphisms and some coherence axioms. 
The following definition is due to Stay 
	\cite{Stay}.

\begin{defn}
	\label{def:CompClosdBicat}
	A \textbf{compact closed} bicategory is a symmetric monoidal bicategory for which
	every object $R$ 
	is part of a coherent dual pair. 
\end{defn}

The difference between showing compact closededness 
in categories versus bicategories might seem quite large 
because of the swallowtail equations.  
Looking at Figure 
	\ref{fig:Swallowtail}, 
it is no surprise that these can be incredibly tedious to work with.  
Fortunately, Pstr\k{a}gowski \cite{Piotr} proved a wonderful strictification theorem 
that effectively circumvents the need to consider the swallowtail equations.  

\begin{thm}[{\cite[p.~22]{Piotr}}]
	\label{thm:StrictingDualPairs}
	Given a dual pair $(L,R,e,c,\alpha,\beta)$, 
	we can find a cusp isomorphism $\beta'$ such that
	 $(L,R,e,c,\alpha,\beta')$ is a coherent dual pair.
\end{thm}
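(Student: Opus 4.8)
The plan is to regard the promotion of an arbitrary dual pair to a coherent one as the monoidal-bicategorical shadow of a standard lemma one categorical dimension below: in any bicategory, an equivalence $f \from a \to b$ equipped with invertible 2-cells $\eta \from 1_a \Rightarrow gf$ and $\epsilon \from fg \Rightarrow 1_b$ can always be refined to an adjoint equivalence by keeping $f$, $g$, $\eta$ fixed and replacing $\epsilon$ by a corrected $\epsilon'$. A dual pair $(L,R,e,c,\alpha,\beta)$ in a monoidal bicategory is precisely a biadjunction $L \dashv R$ in the one-object tricategory obtained by delooping, with $e$ and $c$ the counit and unit, $\alpha$ and $\beta$ the triangulators, and the swallowtail diagrams of Figure~\ref{fig:Swallowtail} the coherence axioms making this biadjunction coherent; since $\alpha$ and $\beta$ are invertible the data is in fact a biadjoint biequivalence. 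So one expects to be able to keep $e$, $c$, $\alpha$ fixed and repair $\beta$, which is exactly what the statement asks, by transcribing the classical correction one level up.

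Concretely, I would first isolate the globular 2-automorphism $\gamma$ of the 1-morphism around the first swallowtail diagram which measures its failure to be an identity. Using invertibility of $\alpha$, I would then define $\beta'$ from $\beta$ by a ``rotation'' pasting: whisker $\gamma^{-1}$ with $e$ and $c$ in the pattern prescribed by the cusp-isomorphism square of Definition~\ref{def:DualPairBicat}, inserting the relevant associator and unitor 2-cells of the monoidal bicategory. This is literally the mates-and-rotation manipulation used to build $\epsilon'$ in the bicategorical lemma, carried out in the delooped setting.

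It then remains to check the two swallowtail equations for $(L,R,e,c,\alpha,\beta')$. The first should hold essentially by construction: after substituting the definition of $\beta'$ and applying naturality of the associativity and unit constraints, the boundary composite collapses to $\gamma \cdot \gamma^{-1}$, hence to the identity. The second swallowtail equation carries the real content, and I expect it to be the main obstacle, since it asserts that correcting the first diagram automatically corrects the second. The cleanest route is to invoke the coherence theorem for monoidal bicategories to replace the ambient structure by an equivalent one in which every associator and unitor occurring in Figure~\ref{fig:Swallowtail} is an identity; the two swallowtail diagrams then degenerate to the pair of triangle-type identities for an ``adjunction up to $\alpha,\beta$'' in a $2$-category, the formula for $\beta'$ becomes the classical one, and a short cancellation argument --- using only that $\alpha$, $\beta'$, $e$, $c$ are invertible --- derives the second identity from the first. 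Transporting the conclusion back along the coherence biequivalence, and noting that a biequivalence is locally faithful and hence reflects equations between $2$-cells rather than merely preserving them, finishes the proof; the genuine labor, as with the swallowtail equations generally, is the bookkeeping needed to carry the pasting through all the structural isomorphisms, which the strictification step is precisely designed to eliminate.
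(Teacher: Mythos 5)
The paper itself offers no proof of this statement---it is imported wholesale from Pstr\k{a}gowski---so the honest comparison is with his argument, and your overall architecture does match it: strictify the ambient monoidal bicategory, use invertibility of $\alpha$ to solve the first swallowtail equation for a corrected $\beta'$ (which appears linearly in that composite, whiskered by structural cells), and then show that this $\beta'$ automatically satisfies the second swallowtail equation. The transport step is also fine: a monoidal biequivalence is locally fully faithful, so it reflects equations between 2-morphisms, and it carries dual pairs to dual pairs.

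There is, however, a genuine gap in the mechanism you propose for the crucial second step. A dual pair is \emph{not} a biadjoint biequivalence: only the cusp 2-cells $\alpha$ and $\beta$ are required to be invertible, while the unit and counit $c \from I \to RL$ and $e \from LR \to I$ are arbitrary 1-morphisms. In the very application this paper makes of the theorem (Theorem~\ref{thm:SpansMapsAreCCBicat}), $e$ and $c$ are the cospans built from the codiagonal $\nabla \from X+X \to X$, which are nowhere near equivalences. Your proposed ``short cancellation argument --- using only that $\alpha$, $\beta'$, $e$, $c$ are invertible'' therefore has a false hypothesis, and the reduction to the 2-categorical adjoint-equivalence lemma (whose proof really does need the unit and counit to be invertible 2-cells) does not transcribe. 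In Pstr\k{a}gowski's proof the second swallowtail equation is instead derived from the first by a string-diagram rotation that uses the duality data $e$, $c$, $\alpha$, $\beta'$ themselves to bend one swallowtail composite into the other; this several-page computation is the actual content of the theorem. A secondary caveat: even after invoking coherence you land in a Gray monoid, not a strict monoidal 2-category, so while the associators and unitors of Figure~\ref{fig:Swallowtail} do become identities, the interchange for $\otimes$ survives as the tensorator and must be tracked through every pasting; the diagrams do not literally degenerate to triangle identities in a 2-category.
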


With the requisite background covered,
we can move on to our main results.

\section{Main results} 
\label{sec:SpansCospans}

\begin{defn}
\label{def:DblCatMonSpanCsp}
	Let $\cat{T}$ be a topos. 
	We will define a double category 
		$\dblmonspcsp{T}$ 
	whose objects are the objects of $\mathbf{T}$,
	vertical 1-morphisms are isomorphism classes of spans with invertible legs in $\mathbf{T}$, 
	horizontal 1-morphisms are cospans in $\mathbf{T}$, and 
	2-morphisms are isomorphism classes of spans of cospans in $\cat{T}$ with monic legs.
	In other words, a $2$-morphism is a
	commuting diagram in $\mathbf{T}$ of the form:
	\[
	\begin{tikzpicture}
	\node (A) at (0,2) {$\bullet$};
	\node (A') at (0,1) {$\bullet$};
	\node (A'') at (0,0) {$\bullet$};
	\node (B) at (1,2) {$\bullet$};
	\node (B') at (1,1) {$\bullet$};
	\node (B'') at (1,0) {$\bullet$};
	\node (C) at (2,2) {$\bullet$};
	\node (C') at (2,1) {$\bullet$};
	\node (C'') at (2,0) {$\bullet$};
	\path[->,font=\scriptsize,>=angle 90]
	(A) edge node[above]{$ $} (B)
	(A') edge node[above]{$ $} (B')
	(A'') edge node[above]{$ $} (B'')
	(C) edge node[above]{$ $} (B)
	(C') edge node[above]{$ $} (B')
	(C'') edge node[above]{$ $} (B'')
	(A') edge node[left]{$\cong$} (A)
	(A') edge node[left]{$\cong$} (A'')
	(B') edge[>->] node[left]{$ $} (B)
	(B') edge[>->] node[left]{$ $} (B'')
	(C') edge node[right]{$\cong$} (C)
	(C') edge node[right]{$\cong$} (C'');
	\end{tikzpicture}
	\]
\end{defn}

\begin{lem}
\label{lem:SpanCospanDoubleCat}
	$\dblmonspcsp{T}$ is a double category.  
\end{lem}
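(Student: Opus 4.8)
The plan is to check directly the data and the axioms listed in Definition~\ref{def:DoubleCategory}. The two ingredients that are not pure formalities---that a horizontal composite of monic spans of cospans again has monic legs, and that horizontal composition commutes with vertical composition (the interchange law)---are exactly what \cite{Cic} supplies: the first is \cite[Lem.~2.2]{Cic}, and the second is the invertibility of the interchange comparison proved there, which on passing to isomorphism classes becomes a strict equality. Everything else is bookkeeping, and the only real point of vigilance is that each construction be well defined on isomorphism classes and keep the relevant span legs monic.

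First I would pin down the two categories. The category of objects $\dblcat{D}_{0}$ has the objects of $\cat{T}$ and, as morphisms, the isomorphism classes of spans $x \xleftarrow{\cong} s \xrightarrow{\cong} x'$ with invertible legs, composed by pullback; since a pullback of an isomorphism is an isomorphism, such spans are closed under this composition, which is associative and unital up to canonical comparison isomorphism, with unit at $x$ the span with both legs $\id_x$, so $\dblcat{D}_{0}$ is a category. The category of arrows $\dblcat{D}_{1}$ has cospans in $\cat{T}$ as objects and isomorphism classes of monic spans of cospans as morphisms, composed by the vertical composite of Section~\ref{sec:Span cospan bicats}, namely the pullback of the two middle spans. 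This is well defined and associative on isomorphism classes because the relevant pullbacks are unique up to canonical isomorphism; the unit on a cospan $x \to y \leftarrow z$ is the span of cospans carrying that same cospan in all three rows with identity legs; and the middle legs of a composite remain monic because pullbacks preserve monomorphisms.

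Next I would introduce the structure functors. The functors $S, T \from \dblcat{D}_{1} \rightrightarrows \dblcat{D}_{0}$ send a cospan to its left, respectively right, foot and a span of cospans to its left, respectively right, vertical leg, which is a span with invertible legs as required; functoriality is immediate. The unit functor $U \from \dblcat{D}_{0} \to \dblcat{D}_{1}$ sends $x$ to the identity cospan on $x$ and a span $f = (x \xleftarrow{\cong} s \xrightarrow{\cong} x')$ to the span of cospans that carries the identity cospan on $s$ in its middle row, has all horizontal arrows identities, and has all three of its vertical legs equal to $f$; a direct check shows $U$ respects units and composites. Horizontal composition $\odot$ is composition of cospans by pushout on objects and the pushout of the three rows on $2$-morphisms, as in Section~\ref{sec:Span cospan bicats}; it lands in monic spans of cospans by \cite[Lem.~2.2]{Cic}, and it preserves vertical composites and units of $2$-morphisms by the invertibility of the interchange comparison of \cite{Cic}, so it is a functor on $\dblcat{D}_{1} \times_{\dblcat{D}_{0}} \dblcat{D}_{1}$. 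The equations $S(U_{A}) = A = T(U_{A})$, $S(M \odot N) = SN$ and $T(M \odot N) = TM$ hold on the nose, directly from the descriptions of identity cospans and of pushout composition.

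Finally I would produce the constraints $\alpha$, $\lambda$, $\rho$, taking them to be the spans of cospans built from the canonical comparison isomorphisms for iterated pushouts and for a pushout along an identity: each carries one of the iterated-pushout cospans in its middle row and has outer legs given by that comparison isomorphism and by identities, hence is invertible in $\dblcat{D}_{1}$. Because pushout composition of cospans does not change the outer feet, $S$ and $T$ send $\alpha$, $\lambda$, $\rho$ to identities, as required. Naturality in every argument, together with the pentagon and triangle identities, follows from the universal property of pushouts; equivalently, since the horizontal edge bicategory $H(\dblcat{D})$ is exactly $\bimonspcsp{T}$, these coherence conditions are precisely those already established for the bicategory $\bimonspcsp{T}$ in \cite{Cic}. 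The one genuine obstacle, the interchange law, is thus imported wholesale from \cite{Cic}; the remainder is routine provided one is careful at each step that the construction descends to isomorphism classes and preserves monicity of the span legs.
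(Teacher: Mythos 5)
Your proposal is correct and follows essentially the same route as the paper: the same object and arrow categories, the same structure functors $U$, $S$, $T$ and pushout composition $\odot$, with monicity preserved by \cite[Lem.~2.2]{Cic} and the interchange law imported from the invertible interchange comparison of \cite{Cic} (the paper isolates this last point as a separate lemma and verifies it by direct computation using \cite[Lem.~2.5]{Cic}, but the underlying argument is the same). The remaining coherence data is handled in both cases by appeal to universal properties of pushouts.
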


\begin{proof}
	Denote $\dblmonspcsp{T}$ by $\dblcat{M}$.  
	The object category $\dblcat{M}_0$ is given by objects of $\mathbf{T}$ and isomorphism classes of spans in $\mathbf{T}$ such that each leg is an isomorphism. 
	The arrow category $\dblcat{M}_1$ has 
	as objects the cospans in $\mathbf{T}$ and 
	as morphisms the isomorphism classes of spans of cospans with monic legs 
	as in the diagram above. We denote a span 
$x \gets y \to z$ as 
$y \from x \tospan z$ and a cospan 
$x \to y \gets z$ as 
$y \from x \tocospan z$.
	
	The functor 
		$U \from \dblcat{M}_0 \to \dblcat{M}_1$, introduced in \ref{def:DoubleCategory}, 
	acts on objects by mapping $x$ to the identity cospan on $x$ and 
	on morphisms by mapping $y \from x \tospan z$, 
	whose legs are isomorphisms, 
	to the square
	\[
	\begin{tikzpicture}
		\node (A) at (0,2) {$x$};
		\node (A') at (0,1) {$y$};
		\node (A'') at (0,0) {$z$};
		\node (B) at (1,2) {$x$};
		\node (B') at (1,1) {$y$};
		\node (B'') at (1,0) {$z$};
		\node (C) at (2,2) {$x$};
		\node (C') at (2,1) {$y$};
		\node (C'') at (2,0) {$z$};
		\path[->,font=\scriptsize,>=angle 90]
		(A) edge node{} (B)
		(A') edge node{} (B')
		(A'') edge node{} (B'')
		(C) edge node{} (B)
		(C') edge node{} (B')
		(C'') edge node{} (B'')
		(A') edge node{} (A)
		(A') edge node{} (A'')
		(B') edge node{} (B)
		(B') edge node{} (B'')
		(C') edge node{} (C)
		(C') edge node{} (C'');
	\end{tikzpicture}
	\]
	The functor 
		$S \from \dblcat{M}_1 \to \dblcat{M}_0$, also introduced in \ref{def:DoubleCategory},
	acts on objects by sending 
		$y \from x \tocospan z$ 
	to $x$ and on morphisms by sending 
	a square to the span occupying the  square's left vertical side.  
	The other functor $T$ is defined similarly.  
	
	The horizontal composition functor 
		$\odot \from \dblcat{M}_1 \times_{\dblcat{M}_0} \dblcat{M}_1 \to \dblcat{M}_1$ 
	acts on objects by composing cospans with pushouts in the usual way.  
	It acts on morphisms by 
	\[
	\raisebox{-0.5\height}{
		\begin{tikzpicture}
		\node (A) at (0,2) {$a$};
		\node (A') at (0,1) {$a'$};
		\node (A'') at (0,0) {$a''$};
		\node (B) at (1,2) {$b$};
		\node (B') at (1,1) {$b'$};
		\node (B'') at (1,0) {$b''$};
		\node (C) at (2,2) {$c$};
		\node (C') at (2,1) {$c'$};
		\node (C'') at (2,0) {$c''$};
		\node (D) at (3,2) {$d$};
		\node (D') at (3,1) {$d'$};
		\node (D'') at (3,0) {$d''$};
		\node (E) at (4,2) {$e$};
		\node (E') at (4,1) {$e'$};
		\node (E'') at (4,0) {$e''$};
		\path[->,font=\scriptsize,>=angle 90]
		(A) edge node[above]{} (B)
		(A') edge node[above]{} (B')
		(A'') edge node[above]{} (B'')
		(C) edge node[above]{} (B)
		(C') edge node[above]{} (B')
		(C'') edge node[above]{} (B'')
		(C) edge node[above]{} (D)
		(C') edge node[above]{} (D')
		(C'') edge node[above]{} (D'')
		(E) edge node[above]{} (D)
		(E') edge node[above]{} (D')
		(E'') edge node[above]{} (D'')
		(A') edge node[left]{} (A)
		(A') edge node[left]{} (A'')
		(B') edge node[left]{} (B)
		(B') edge node[left]{} (B'')
		(C') edge node[left]{} (C)
		(C') edge node[left]{} (C'')	
		(D') edge node[left]{} (D)
		(D') edge node[left]{} (D'')
		(E') edge node[left]{} (E)
		(E') edge node[left]{} (E'');
		\end{tikzpicture}
	}
	\quad
	\xmapsto[]{\odot}
	\quad
	\raisebox{-0.5\height}{
		\begin{tikzpicture}
		\node (A) at (0,2) {$a$};
		\node (A') at (0,1) {$a'$};
		\node (A'') at (0,0) {$a''$};
		\node (B) at (1.5,2) {$b+_{c}d$};
		\node (B') at (1.5,1) {$b'+_{c'}d'$};
		\node (B'') at (1.5,0) {$b''+_{c''}d''$};
		\node (C) at (3,2) {$e$};
		\node (C') at (3,1) {$e'$};
		\node (C'') at (3,0) {$e''$};
		\path[->,font=\scriptsize,>=angle 90]
		(A) edge node[above]{} (B)
		(A') edge node[above]{} (B')
		(A'') edge node[above]{} (B'')
		(C) edge node[above]{} (B)
		(C') edge node[above]{} (B')
		(C'') edge node[above]{} (B'')
		(A') edge node[left]{} (A)
		(A') edge node[left]{} (A'')
		(B') edge node[left]{} (B)
		(B') edge node[left]{} (B'')
		(C') edge node[left]{} (C)
		(C') edge node[left]{} (C'');	
		\end{tikzpicture}
	}
	\]
	This respects identities. We prove that $\odot$ preserves
	composition in Lemma \ref{lem:InterchangeDblCat} below. 
	It is straightforward to check that the required equations are satisfied.  
	The associator and unitors are given by natural isomorphisms that arise from universal properties.  
\end{proof}

\begin{lem}
\label{lem:InterchangeDblCat}
	The assignment $\odot$ from 
	Lemma \ref{lem:SpanCospanDoubleCat} 
	preserves composition. 
	In particular, $\odot$ is a functor.
\end{lem}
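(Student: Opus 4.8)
The statement to prove is that the horizontal composition assignment $\odot$ on $\dblcat{M}_1 \times_{\dblcat{M}_0} \dblcat{M}_1$ preserves composition of 2-morphisms, i.e. that the interchange law
\[
(b \odot a) \cdot (b' \odot a') = (b \cdot b') \odot (a \cdot a')
\]
holds for vertically composable 2-morphisms $a, a'$ and $b, b'$ whose horizontal source/target data agree appropriately. Since vertical composition of 2-morphisms is computed by pullback over the apex of the inner cospans, and horizontal composition is computed by pushout along the shared boundary object, the entire claim reduces to a compatibility between a particular pullback and a particular pushout in the topos $\cat{T}$. The plan is therefore to set up the two sides of the interchange equation explicitly as iterated (co)limits and exhibit a canonical isomorphism between them, checking that this isomorphism respects all the structure maps (the cospan legs and the span legs) so that it is an isomorphism of spans of cospans, hence an identity after passing to isomorphism classes.

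\textbf{Key steps, in order.} First I would fix notation: write the four 2-morphisms as spans of cospans and label all nine objects of each of the relevant squares, identifying which objects are glued in the vertical composites (a pullback $y' \times_y y''$ in the middle column) and which in the horizontal composites (pushouts $y +_z w$, etc.). Second, I would compute the left-hand side $(b \odot a) \cdot (b' \odot a')$: horizontally compose first, producing inner apexes $y' +_z w'$ on top and the analogous pushouts below, then vertically compose, producing a pullback of the form $(y'+_z w') \times_{(y+_z w)} (y''+_z w'')$. Third, compute the right-hand side $(b\cdot b')\odot (a\cdot a')$: vertically compose first to get inner apexes $y'\times_y y''$ and $w'\times_w w''$, then horizontally compose to get the pushout $(y'\times_y y'') +_{(z' \times_z z'')} (w'\times_w w'')$ — but note $z' \times_z z''$ collapses appropriately since the relevant vertical legs on the boundary object are identities (or isomorphisms). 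Fourth, I would invoke the fact that $\cat T$ is a topos — in particular that pullbacks commute with pushouts along monomorphisms (adhesivity / the van Kampen property), together with \cite[Lem.~2.2]{Cic} which already guarantees the pushout legs stay monic — to produce the comparison isomorphism between the two composite apexes, and check it is compatible with the outer cospan structure (which on both sides is just $x +_z w \to \cdots \gets v$, assembled from the same data).

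\textbf{Main obstacle.} The genuine content is the interchange of the central pullback with the central pushout: one must show
\[
\bigl(y'+_z w'\bigr) \times_{(y+_z w)} \bigl(y''+_z w''\bigr) \;\cong\; \bigl(y'\times_y y''\bigr) +_{?} \bigl(w'\times_w w''\bigr),
\]
and it is precisely here that the monic hypothesis on the span legs is essential — without it the interchange is only lax (as in Grandis–Paré \cite{GranPare_Intercats}). I expect this step to be the heart of the argument; everything else is bookkeeping with universal properties. I would handle it by appealing to the adhesivity of a topos: pushouts along monomorphisms are stable under pullback, which lets the pullback functor be distributed across the pushout square defining the horizontal composite, yielding the desired isomorphism. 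The remaining verifications — that this isomorphism is natural, that it intertwines the span legs and cospan legs on the two sides, and that $\odot$ preserves identity 2-morphisms (already noted in the previous lemma) — are routine diagram chases that I would state are straightforward, concluding that $\odot$ is a functor.
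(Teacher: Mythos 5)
Your proposal follows the same architecture as the paper's proof: fix the four composable 2-morphisms, compute both iterated composites explicitly, observe that the two resulting spans of cospans agree except in the middle column, and reduce the whole lemma to a single canonical isomorphism between a pullback of pushouts and a pushout of pullbacks. You also correctly flag that the outer vertical data collapses because the boundary vertical 1-morphisms have invertible (or identity) legs, which is exactly the observation the paper uses to make its citation of \cite[Lem.~2.5]{Cic} applicable. So the skeleton is right and matches the paper.

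The one place where your argument is not yet sound is the justification of the central isomorphism
\[
\bigl(y'+_{z} w'\bigr) \times_{(y+_{z} w)} \bigl(y''+_{z} w''\bigr) \;\cong\; \bigl(y'\times_y y''\bigr) +_{z} \bigl(w'\times_w w''\bigr).
\]
You propose to get this from adhesivity in the form ``pushouts along monomorphisms are stable under pullback.'' But the pushouts appearing here are the horizontal composites, taken along the cospan legs $z \to y'$, $z \to w'$ (and $z \to y$, $z \to w$), and those legs are \emph{not} assumed monic; the monomorphisms in the picture are the span legs $y' \rightarrowtail y$, $y' \rightarrowtail y''$, etc., i.e.\ the maps along which the pullback is formed, not the maps along which the pushout is formed. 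So the van Kampen property of pushouts along monos does not directly let you distribute the pullback across the pushout square, and as stated this step does not go through. The paper avoids the issue by citing \cite[Lem.~2.5]{Cic}, which establishes precisely this comparison isomorphism for monic spans of cospans in a topos (its proof uses more of the topos structure than bare adhesivity). Either cite that lemma as the paper does, or supply the finer argument; the rest of your write-up is fine.
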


\begin{proof}
	Let $\alpha$, $\alpha^\prime$, $\beta$, and $\beta^\prime$ 
	be composable 2-morphisms given by
	\[
	\begin{tikzpicture}
	\node () at (-0.75,1) {$\alpha =$};
	\node (A) at (0,2) {$a$};
	\node (A') at (0,1) {$a'$};
	\node (A'') at (0,0) {$\ell$};
	\node (B) at (1,2) {$b$};
	\node (B') at (1,1) {$b'$};
	\node (B'') at (1,0) {$m$};
	\node (C) at (2,2) {$c$};
	\node (C') at (2,1) {$c'$};
	\node (C'') at (2,0) {$n$};
	\path[->,font=\scriptsize,>=angle 90]
	(A) edge node[above]{$$} (B)
	(A') edge node[above]{$$} (B')
	(A'') edge node[above]{$$} (B'')
	(C) edge node[above]{$$} (B)
	(C') edge node[above]{$$} (B')
	(C'') edge node[above]{$$} (B'')
	(A') edge node[left]{$\cong$} (A)
	(A') edge node[left]{$\cong$} (A'')
	(B') edge[>->] node[left]{} (B)
	(B') edge[>->] node[left]{} (B'')
	(C') edge node[right]{$\cong$} (C)
	(C') edge node[right]{$\cong$} (C'');	
	\end{tikzpicture}
	\quad \quad
	\begin{tikzpicture}
	\node () at (-0.75,1) {$\alpha' =$};
	\node (A) at (0,2) {$c$};
	\node (A') at (0,1) {$c'$};
	\node (A'') at (0,0) {$n$};
	\node (B) at (1,2) {$d$};
	\node (B') at (1,1) {$d'$};
	\node (B'') at (1,0) {$p$};
	\node (C) at (2,2) {$e$};
	\node (C') at (2,1) {$e'$};
	\node (C'') at (2,0) {$q$};
	\path[->,font=\scriptsize,>=angle 90]
	(A) edge node[above]{$$} (B)
	(A') edge node[above]{$$} (B')
	(A'') edge node[above]{$$} (B'')
	(C) edge node[above]{$$} (B)
	(C') edge node[above]{$$} (B')
	(C'') edge node[above]{$$} (B'')
	(A') edge node[left]{$\cong$} (A)
	(A') edge node[left]{$\cong$} (A'')
	(B') edge[>->] node[left]{} (B)
	(B') edge[>->] node[left]{} (B'')
	(C') edge node[right]{$\cong$} (C)
	(C') edge node[right]{$\cong$} (C'');	
	\end{tikzpicture}
	\]
	\[
	\begin{tikzpicture}
	\node () at (-0.75,1) {$\beta =$};
	\node (A) at (0,2) {$\ell$};
	\node (A') at (0,1) {$v'$};
	\node (A'') at (0,0) {$v$};
	\node (B) at (1,2) {$m$};
	\node (B') at (1,1) {$w'$};
	\node (B'') at (1,0) {$w$};
	\node (C) at (2,2) {$n$};
	\node (C') at (2,1) {$x'$};
	\node (C'') at (2,0) {$x$};
	\path[->,font=\scriptsize,>=angle 90]
	(A) edge node[above]{$$} (B)
	(A') edge node[above]{$$} (B')
	(A'') edge node[above]{$$} (B'')
	(C) edge node[above]{$$} (B)
	(C') edge node[above]{$$} (B')
	(C'') edge node[above]{$$} (B'')
	(A') edge node[left]{$\cong$} (A)
	(A') edge node[left]{$\cong$} (A'')
	(B') edge[>->] node[left]{} (B)
	(B') edge[>->] node[left]{} (B'')
	(C') edge node[right]{$\cong$} (C)
	(C') edge node[right]{$\cong$} (C'');	
	\end{tikzpicture}
	\quad \quad
	\begin{tikzpicture}
	\node () at (-0.75,1) {$\beta' =$};
	\node (A) at (0,2) {$n$};
	\node (A') at (0,1) {$x'$};
	\node (A'') at (0,0) {$x$};
	\node (B) at (1,2) {$p$};
	\node (B') at (1,1) {$y'$};
	\node (B'') at (1,0) {$y$};
	\node (C) at (2,2) {$q$};
	\node (C') at (2,1) {$z'$};
	\node (C'') at (2,0) {$z$};
	\path[->,font=\scriptsize,>=angle 90]
	(A) edge node[above]{$$} (B)
	(A') edge node[above]{$$} (B')
	(A'') edge node[above]{$$} (B'')
	(C) edge node[above]{$$} (B)
	(C') edge node[above]{$$} (B')
	(C'') edge node[above]{$$} (B'')
	(A') edge node[left]{$\cong$} (A)
	(A') edge node[left]{$\cong$} (A'')
	(B') edge[>->] node[left]{} (B)
	(B') edge[>->] node[left]{} (B'')
	(C') edge node[right]{$\cong$} (C)
	(C') edge node[right]{$\cong$} (C'');	
	\end{tikzpicture}
	\]
	Our goal is to show that
	\begin{equation}
	\label{eq:InterchangeCspSpan}
		(\alpha \odot \alpha') \circ (\beta \odot \beta')
		=
		(\alpha \circ \beta) \odot (\alpha' \circ \beta').
	\end{equation}
	The left hand side of \eqref{eq:InterchangeCspSpan} 
	corresponds to performing horizontal composition 
	before vertical composition. 
	The right hand side of \eqref{eq:InterchangeCspSpan} reverses that order.
	
	First, we compute the left hand side 
	of \eqref{eq:InterchangeCspSpan}. 
	Composing horizontally, 
	$\alpha \odot \alpha'$ and $\beta \odot \beta'$ 
	are, respectively,
	\[
	\begin{tikzpicture}
		\node (A) at (1,1) {$a$};
		\node (A') at (1,0) {$a'$};
		\node (A'') at (1,-1) {$\ell$};
		\node (B) at (3,1) {$b+_{c}d$};
		\node (B') at (3,0) {$b'+_{c'}d'$};
		\node (B'') at (3,-1) {$m+_{n}p$};
		\node (C) at (5,1) {$e$};
		\node (C') at (5,0) {$e'$};
		\node (C'') at (5,-1) {$q$};
		\path[->,font=\scriptsize,>=angle 90]
		(A) edge node[above]{$$} (B)
		(A') edge node[above]{$$} (B')
		(A'') edge node[above]{$$} (B'')
		(C) edge node[above]{$$} (B)
		(C') edge node[above]{$$} (B')
		(C'') edge node[above]{$$} (B'')
		(A') edge node[left]{$\cong$} (A)
		(A') edge node[left]{$\cong$} (A'')
		(B') edge[>->] node[left]{} (B)
		(B') edge[>->] node[left]{} (B'')
		(C') edge node[right]{$\cong$} (C)
		(C') edge node[right]{$\cong$} (C'');	
	\end{tikzpicture}
	\quad \quad
	\begin{tikzpicture}
		\node (A) at (1,1) {$\ell$};
		\node (A') at (1,0) {$v'$};
		\node (A'') at (1,-1) {$v$};
		\node (B) at (3,1) {$m+_{n}p$};
		\node (B') at (3,0) {$w'+_{x'}y'$};
		\node (B'') at (3,-1) {$w+_{x}y$};
		\node (C) at (5,1) {$q$};
		\node (C') at (5,0) {$z'$};
		\node (C'') at (5,-1) {$z$};
		\path[->,font=\scriptsize,>=angle 90]
		(A) edge node[above]{$$} (B)
		(A') edge node[above]{$$} (B')
		(A'') edge node[above]{$$} (B'')
		(C) edge node[above]{$$} (B)
		(C') edge node[above]{$$} (B')
		(C'') edge node[above]{$$} (B'')
		(A') edge node[left]{$\cong$} (A)
		(A') edge node[left]{$\cong$} (A'')
		(B') edge[>->] node[left]{} (B)
		(B') edge[>->] node[left]{} (B'')
		(C') edge node[right]{$\cong$} (C)
		(C') edge node[right]{$\cong$} (C'');	
	\end{tikzpicture}
	\]
	The preservation of monics under 
	this operation follows from 
		\cite[Lem.~ 2.1]{Cic}.  
	Next, vertically composing 
	$\alpha \odot \alpha^\prime$ and 
	$\beta \odot \beta^\prime$, we get that 
	$(\alpha \odot \alpha^\prime) \circ (\beta \odot \beta^\prime)$ 
	is equal to
	\begin{equation}
	\label{diag:IntrchngHorVertCspSpan}
	\raisebox{-0.5\height}{
		\begin{tikzpicture}
		\node (A) at (1,1) {$a$};
		\node (A') at (1,0) {$a'\times_{\ell}v'$};
		\node (A'') at (1,-1) {$v$};
		\node (B) at (5,1) {$b+_{d}d$};
		\node (B') at (5,0) {$(b'+_{c'}d') \times_{(m+_{n}p)} (w'+_{x'}y')$};
		\node (B'') at (5,-1) {$w+_{x}y$};
		\node (C) at (9,1) {$e$};
		\node (C') at (9,0) {$e' \times_{q}z'$};
		\node (C'') at (9,-1) {$z$};
		\path[->,font=\scriptsize,>=angle 90]
		(A) edge node[above]{$$} (B)
		(A') edge node[above]{$$} (B')
		(A'') edge node[above]{$$} (B'')
		(C) edge node[above]{$$} (B)
		(C') edge node[above]{$$} (B')
		(C'') edge node[above]{$$} (B'')
		(A') edge node[left]{$\cong$} (A)
		(A') edge node[left]{$\cong$} (A'')
		(B') edge[>->] node[left]{} (B)
		(B') edge[>->] node[left]{} (B'')
		(C') edge node[right]{$\cong$} (C)
		(C') edge node[right]{$\cong$} (C'');	
		\end{tikzpicture}
	}
	\end{equation}
	
	Solving for the right hand side 
	of \eqref{eq:InterchangeCspSpan}, 
	we first obtain that 
	$\alpha \circ \beta$ and $\alpha' \circ \beta'$ 
	are, respectively,
	\[
	\begin{tikzpicture}
		\node (A) at (1,1) {$a$};
		\node (A') at (1,0) {$a' \times_{\ell}v'$};
		\node (A'') at (1,-1) {$v$};
		\node (B) at (3,1) {$b$};
		\node (B') at (3,0) {$b' \times_{m}w'$};
		\node (B'') at (3,-1) {$w$};
		\node (C) at (5,1) {$c$};
		\node (C') at (5,0) {$c' \times_{n}x'$};
		\node (C'') at (5,-1) {$x$};
		\path[->,font=\scriptsize,>=angle 90]
		(A) edge node[above]{$$} (B)
		(A') edge node[above]{$$} (B')
		(A'') edge node[above]{$$} (B'')
		(C) edge node[above]{$$} (B)
		(C') edge node[above]{$$} (B')
		(C'') edge node[above]{$$} (B'')
		(A') edge node[left]{$\cong$} (A)
		(A') edge node[left]{$\cong$} (A'')
		(B') edge[>->] node[left]{} (B)
		(B') edge[>->] node[left]{} (B'')
		(C') edge node[right]{$\cong$} (C)
		(C') edge node[right]{$\cong$} (C'');	
	\end{tikzpicture}
	\quad \quad 
	\begin{tikzpicture}
		\node (A) at (1,1) {$c$};
		\node (A') at (1,0) {$c' \times_{n}x'$};
		\node (A'') at (1,-1) {$x$};
		\node (B) at (3,1) {$d$};
		\node (B') at (3,0) {$d' \times_{p}y'$};
		\node (B'') at (3,-1) {$y$};
		\node (C) at (5,1) {$e$};
		\node (C') at (5,0) {$e' \times_{q}z'$};
		\node (C'') at (5,-1) {$z$};
		\path[->,font=\scriptsize,>=angle 90]
		(A) edge node[above]{$$} (B)
		(A') edge node[above]{$$} (B')
		(A'') edge node[above]{$$} (B'')
		(C) edge node[above]{$$} (B)
		(C') edge node[above]{$$} (B')
		(C'') edge node[above]{$$} (B'')
		(A') edge node[left]{$\cong$} (A)
		(A') edge node[left]{$\cong$} (A'')
		(B') edge[>->] node[left]{} (B)
		(B') edge[>->] node[left]{} (B'')
		(C') edge node[right]{$\cong$} (C)
		(C') edge node[right]{$\cong$} (C'');	
	\end{tikzpicture}
	\]
	Composing these horizontally, 
	we get that 
	$(\alpha \circ \beta) \odot (\alpha^\prime \circ \beta^\prime)$ 
	equals
	\begin{equation}
	\label{diag:IntrchngVertHorCspSpan}
	\raisebox{-0.5\height}{
		\begin{tikzpicture}
		\node (A) at (1,1) {$a$};
		\node (A') at (1,0) {$a' \times_{\ell}v'$};
		\node (A'') at (1,-1) {$v$};
		\node (B) at (5,1) {$b +_{c}d$};
		\node (B') at (5,0) {$(b'\times_{m}w')+_{(c'\times_{n}x')}(d'\times_{p}y')$};
		\node (B'') at (5,-1) {$w+_{x}y$};
		\node (C) at (9,1) {$e$};
		\node (C') at (9,0) {$e' \times_{q}z'$};
		\node (C'') at (9,-1) {$z$};
		\path[->,font=\scriptsize,>=angle 90]
		(A) edge node[above]{$$} (B)
		(A') edge node[above]{$$} (B')
		(A'') edge node[above]{$$} (B'')
		(C) edge node[above]{$$} (B)
		(C') edge node[above]{$$} (B')
		(C'') edge node[above]{$$} (B'')
		(A') edge node[left]{$\cong$} (A)
		(A') edge node[left]{$\cong$} (A'')
		(B') edge[>->] node[left]{} (B)
		(B') edge[>->] node[left]{} (B'')
		(C') edge node[right]{$\cong$} (C)
		(C') edge node[right]{$\cong$} (C'');	
		\end{tikzpicture}
	}
	\end{equation}
	
	Now, we need to show that 
	\eqref{diag:IntrchngHorVertCspSpan} is equal to 
	\eqref{diag:IntrchngVertHorCspSpan} 
	as 2-morphisms.  
	Note that the diagrams only differ in the middle.  
	Thus, to complete the interchange law, 
	it suffices to establish an isomorphism 
	\[
		(b'+_{c'}d') \times_{(m+_{n}p)} (w'+_{x'}y')
		\to 
		(b' \times_{m}w')+_{(c' \times_{n}x')}(d' \times_{p}y')
	\]
	But because the left and right vertical spans 
	have isomorphisms for legs, 
	the isomorphism we seek follows from \cite[Lem.~2.5]{Cic}. 
\end{proof}

\begin{lem}
\label{lem:SpanCospanSM}
	$\dblmonspcsp{T}$ is a symmetric monoidal double category.  
\end{lem}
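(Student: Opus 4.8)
The plan is to equip both $\dblcat{M}_0$ and $\dblcat{M}_1$ with the symmetric monoidal structure induced by the coproduct $+$ of $\cat{T}$ (which exists, together with the needed pushouts and the initial object, since $\cat{T}$ is a topos), and then to run down the list of conditions in Definition~\ref{def:MonoidalDoubleCategory}. Writing $\dblcat{M}$ for $\dblmonspcsp{T}$, I would define $\otimes$ on $\dblcat{M}_0$ by $x \otimes w = x + w$ on objects and by the coproduct of spans on vertical $1$-morphisms; a coproduct of isomorphisms is an isomorphism, so this stays in $\dblcat{M}_0$ and is well defined on isomorphism classes. On $\dblcat{M}_1$ I would define $\otimes$ by $(y \from x \tocospan z) \otimes (w \from v \tocospan u) = (y + w \from x + v \tocospan z + u)$ on horizontal $1$-morphisms and by the coproduct of spans of cospans on $2$-morphisms; here I need that a coproduct of two monomorphisms is a monomorphism in a topos, which keeps the inner span's legs monic, and again this respects isomorphism classes. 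The monoidal unit of $\dblcat{M}_0$ is the initial object $I = 0$, and $U_0$, the identity cospan on $0$, is the monoidal unit of $\dblcat{M}_1$ because $0 + z \cong z$ naturally.

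Next I would check the double-functor data. The functors $S, T \from \dblcat{M}_1 \to \dblcat{M}_0$ send a cospan to its outer objects and a span of cospans to its outer span, so they are strictly monoidal (and, once the braiding is installed, strictly braided), as $S$ and $T$ of a coproduct are literally coproducts of the outer data. The globular $2$-isomorphism $\mathfrak{x} \from (M_1 \otimes N_1) \odot (M_2 \otimes N_2) \to (M_1 \odot M_2) \otimes (N_1 \odot N_2)$ is the canonical comparison arising from the fact that a pushout of a coproduct is the coproduct of the pushouts, i.e.\ colimits commute with colimits; it is globular because it restricts to the identity on the coproduct of the outer objects, and this is the same commutation already used to prove the interchange law in Lemma~\ref{lem:InterchangeDblCat}. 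The globular $2$-isomorphism $\mathfrak{u} \from U_{A \otimes B} \to U_A \otimes U_B$ is the isomorphism expressing that the identity cospan on a coproduct is the coproduct of the identity cospans.

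For the coherence axioms of Definition~\ref{def:MonoidalDoubleCategory} — the hexagon and two unit squares constraining $\otimes$, the associativity and unit squares for $\mathfrak{x}$ and $\mathfrak{u}$, and, after the braiding $\mathfrak{s}$ is in place, the two braiding squares — I would observe that in every case the vertices are iterated colimits (coproducts and pushouts) of one fixed diagram in $\cat{T}$ and the edges are the canonical comparison maps between them, so each diagram commutes by the uniqueness clause of the relevant universal property; equivalently, they all follow from the coherence theorem for symmetric monoidal categories applied to $(\cat{T}, +, 0)$ together with the interchange isomorphism already established. The braiding $\mathfrak{s}$ on $\dblcat{M}_0$ and $\dblcat{M}_1$ is induced by the symmetry $x + w \cong w + x$ of the coproduct, which squares to the identity, so both categories are symmetric monoidal, the functors $S$ and $T$ are strict symmetric monoidal, and hence $\dblcat{M}$ satisfies every clause of Definition~\ref{def:MonoidalDoubleCategory}.

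I expect the only real work to be organizational: Definition~\ref{def:MonoidalDoubleCategory} lists a great many diagrams, and the task is to recognize each as an instance of uniqueness of maps between colimits rather than to compute anything. The two genuinely topos-theoretic inputs are that coproducts commute with pushouts — which is what makes $\mathfrak{x}$ invertible and globular — and that a coproduct of monomorphisms is a monomorphism — which is what keeps the coproduct of $2$-morphisms inside $\dblcat{M}_1$; everything else is formal.
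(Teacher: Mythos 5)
Your proposal is correct and follows essentially the same route as the paper: lift the cocartesian structure of $\cat{T}$ to $\dblcat{M}_0$ and $\dblcat{M}_1$, obtain $\mathfrak{x}$ and $\mathfrak{u}$ from the fact that both sides are colimits of one and the same (non-connected) diagram, and dispatch the coherence diagrams by uniqueness of comparison maps between such colimits. The paper additionally records explicitly that the legs of $\mathfrak{x}$ are isomorphisms (hence monic, so $\mathfrak{x}$ really is a $2$-morphism of $\dblcat{M}_1$), a point worth stating rather than leaving implicit in ``invertible.''
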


\begin{proof}
	Let us first show that the category of objects 
		$\dblcat{M}_0$ 
	and the category of arrows 
		$\dblcat{M}_1$ 
	are symmetric monoidal categories.  
	Note that $\dblcat{M}_0$ is the 
	largest groupoid contained in $\mathbf{Sp(T)}$. 
	We obtain the monoidal structure on $\dblcat{M}_{0}$ by
	lifting the cocartesian structure on $\cat{T}$
	to the objects and by defining
	\[
		 (b \from a \tospan c) + (b' \from a' \tospan c')
		 =
		 (b+b' \from a+a' \tospan c+c')
	\]
	on morphisms.  
	Universal properties provide the 
	associator and unitors as well as 
	the coherence axioms. 
	This monoidal structure is clearly symmetric.
	
	Next, we have that $\dblcat{M}_1$ is the category whose 
	objects are the cospans in $\cat{T}$ and 
	morphisms are the isomorphism classes of monic spans of cospans in $\cat{T}$.
	We obtain a symmetric monoidal structure on the objects via 
	\[
	(b \from a \tocospan c) + (b' \from a' \tocospan c')
	=
	(b+b' \from a+a' \tocospan c+c')
	\]
	and on the morphisms by
	\[
	\raisebox{-0.5\height}{
		\begin{tikzpicture}
		\node (A) at (0,2) {$\bullet$};
		\node (A') at (0,1) {$\bullet$};
		\node (A'') at (0,0) {$\bullet$};
		\node (B) at (1,2) {$\bullet$};
		\node (B') at (1,1) {$\bullet$};
		\node (B'') at (1,0) {$\bullet$};
		\node (C) at (2,2) {$\bullet$};
		\node (C') at (2,1) {$\bullet$};
		\node (C'') at (2,0) {$\bullet$};
		\path[->,font=\scriptsize,>=angle 90]
		(A) edge node[above]{} (B)
		(A') edge node[above]{} (B')
		(A'') edge node[above]{} (B'')
		(C) edge node[above]{} (B)
		(C') edge node[above]{} (B')
		(C'') edge node[above]{} (B'')
		(A') edge node[left]{} (A)
		(A') edge node[left]{} (A'')
		(B') edge[>->] node[left]{} (B)
		(B') edge[>->] node[left]{} (B'')
		(C') edge node[left]{} (C)
		(C') edge node[left]{} (C'');	
		\end{tikzpicture}
	}
	\quad + \quad
	\raisebox{-0.5\height}{
		\begin{tikzpicture}
		\node (A) at (0,2) {$\ast$};
		\node (A') at (0,1) {$\ast$};
		\node (A'') at (0,0) {$\ast$};
		\node (B) at (1,2) {$\ast$};
		\node (B') at (1,1) {$\ast$};
		\node (B'') at (1,0) {$\ast$};
		\node (C) at (2,2) {$\ast$};
		\node (C') at (2,1) {$\ast$};
		\node (C'') at (2,0) {$\ast$};
		\path[->,font=\scriptsize,>=angle 90]
		(A) edge node[above]{} (B)
		(A') edge node[above]{} (B')
		(A'') edge node[above]{} (B'')
		(C) edge node[above]{} (B)
		(C') edge node[above]{} (B')
		(C'') edge node[above]{} (B'')
		(A') edge node[left]{} (A)
		(A') edge node[left]{} (A'')
		(B') edge[>->] node[left]{} (B)
		(B') edge[>->] node[left]{} (B'')
		(C') edge node[left]{} (C)
		(C') edge node[left]{} (C'');	
		\end{tikzpicture}
	}
	\quad = \quad
	\raisebox{-0.5\height}{
		\begin{tikzpicture}
		\node (A) at (0,2) {$\bullet+\ast$};
		\node (A') at (0,1) {$\bullet+\ast$};
		\node (A'') at (0,0) {$\bullet+\ast$};
		\node (B) at (1.5,2) {$\bullet+\ast$};
		\node (B') at (1.5,1) {$\bullet+\ast$};
		\node (B'') at (1.5,0) {$\bullet+\ast$};
		\node (C) at (3,2) {$\bullet+\ast$};
		\node (C') at (3,1) {$\bullet+\ast$};
		\node (C'') at (3,0) {$\bullet+\ast$};
		\path[->,font=\scriptsize,>=angle 90]
		(A) edge node[above]{} (B)
		(A') edge node[above]{} (B')
		(A'') edge node[above]{} (B'')
		(C) edge node[above]{} (B)
		(C') edge node[above]{} (B')
		(C'') edge node[above]{} (B'')
		(A') edge node[left]{} (A)
		(A') edge node[left]{} (A'')
		(B') edge[>->] node[left]{} (B)
		(B') edge[>->] node[left]{} (B'')
		(C') edge node[left]{} (C)
		(C') edge node[left]{} (C'');	
		\end{tikzpicture}
	}
	\]
	Again, universal properties provide 
	the associator, unitors, and coherence axioms.  
	Hence both $\dblcat{M}_0$ and $\dblcat{M}_1$ 
	are symmetric monoidal categories.

It remains to find globular isomorphisms 
	$\mathfrak{x}$ 
	and $\mathfrak{u}$ such that 
	the required diagrams commute. 
	To find $\mathfrak{x}$, fix horizontal 1-morphisms 
	\begin{align*}
		b & \from a \tocospan c, & w &\from v \tocospan x, \\
		d & \from c \tocospan e, & y &\from x \tocospan z'.
	\end{align*}
	The globular isomorphism $\mathfrak{x}$ is an invertible 2-morphism with domain
	\[
		(a+v) \to (b+w) +_{(c+x)} (d+y) \gets (e+z)
	\]
	and codomain
	\[
		(a+v) \to (b+_c d) + (w+_x y) \gets (e+z).
	\]
	This comes down to finding an isomorphism in $\mathbf{T}$ 
	between the apices of the above cospans.  
	Such an isomorphism exists, and is unique, 
	because both apices are colimits of the non-connected diagram
	\[
		\begin{tikzpicture}
			\node (a) at (0,0) {$a$};
			\node (b) at (1,.5) {$b$};
			\node (c) at (2,0) {$c$};
			\node (d) at (3,.5) {$d$};
			\node (e) at (4,0) {$e$};
			\node (v) at (5,0) {$v$};
			\node (w) at (6,.5) {$w$};
			\node (x) at (7,0) {$x$};
			\node (y) at (8,.5) {$y$};
			\node (z) at (9,0) {$z$};
			\path[->,font=\scriptsize,>=angle 90]
			(a) edge node[above]{$$} (b)
			(c) edge node[above]{$$} (b)
			(c) edge node[above]{$$} (d)
			(e) edge node[above]{$$} (d)
			(v) edge node[above]{$$} (w)
			(x) edge node[above]{$$} (w)
			(x) edge node[above]{$$} (y)
			(z) edge node[above]{$$} (y);
		\end{tikzpicture}
	\]
	Moreover, the resulting globular isomorphism is a monic span of cospans as the universal maps are isomorphisms. The globular isomorphism $\mathfrak{u}$ is similar. 
	
	Finally, we check that the coherence axioms, 
	namely (a)-(k) of Definition 
		\ref{def:MonoidalDoubleCategory}, 
	hold. 
	These are straightforward, though tedious, to verify. 
	For instance, if we have
	\[
		\begin{tikzpicture}
			\node (a) at (0,0) {$a$};
			\node (b) at (1,.5) {$b$};
			\node (c) at (2,0) {$c$};
			\node (M1) at (-1,.25) {$M_1 =$};
			\node (M2) at (3,.25) {$M_2 =$};
			\node (c2) at (4,0) {$c$};
			\node (d) at (5,.5) {$d$};
			\node (e) at (6,0) {$e$};
			\node (M3) at (7,.25) {$M_3 =$};
			\node (e2) at (8,0) {$e$};
			\node (f) at (9,.5) {$f$};
			\node (g) at (10,0) {$g$};
			\node (t) at (0,-2) {$t$};
			\node (u) at (1,-1.5) {$u$};
			\node (v) at (2,-2) {$v$};
			\node (N1) at (-1,-1.75) {$N_1 =$};
			\node (N2) at (3,-1.75) {$N_2 =$};
			\node (v2) at (4,-2) {$v$};
			\node (w) at (5,-1.5) {$w$};
			\node (x) at (6,-2) {$x$};
			\node (N3) at (7,-1.75) {$N_3 =$};
			\node (x2) at (8,-2) {$x$};
			\node (y) at (9,-1.5) {$y$};
			\node (z) at (10,-2) {$z$};
			\path[->,font=\scriptsize,>=angle 90]
			(a) edge node[above]{$$} (b)
			(c) edge node[above]{$$} (b)
			(c2) edge node[above]{$$} (d)
			(e) edge node[above]{$$} (d)
			(v2) edge node[above]{$$} (w)
			(e2) edge node[above]{$$} (f)
			(g) edge node[above]{$$} (f)
			(t) edge node[above]{$$} (u)
			(v) edge node[above]{$$} (u)
			(x) edge node[above]{$$} (w)
			(x2) edge node[above]{$$} (y)
			(z) edge node[above]{$$} (y);
		\end{tikzpicture}
	\]
then following diagram \eqref{diag:MonDblCat} 
around the top right gives the sequence of cospans
\[
		\begin{tikzpicture}
			\node (a) at (-3,0) {$a+t$};
			\node (b) at (1,.5) {$((b+w)+_{c+v}(d+w))+_{e+x}(f+y)$};
			\node (c) at (5,0) {$g+z$};
			\node (M1) at (-2,1.5) {$((M_1 \otimes N_1) \odot (M_2 \otimes N_2)) \odot (M_3 \otimes N_3) =$};
			\path[->,font=\scriptsize]
			(a) edge[in=180,out=45] node[above]{$$} (b)
			(c) edge[in=0,out=135] node[above]{$$} (b);
		\end{tikzpicture}
	\]
	\[
		\begin{tikzpicture}
			\node (a) at (-3,0) {$a+t$};
			\node (b) at (1,.5) {$((b+_c d)+ (u+_v w))+_{e+x}(f+y)$};
			\node (c) at (5,0) {$g+z$};
			\node (M1) at (-2,1.5) {$((M_1 \odot M_2) \otimes (N_1 \odot N_2)) \odot (M_3 \otimes N_3)=$};
			\path[->,font=\scriptsize]
			(a) edge[in=180,out=45] node[above]{$$} (b)
			(c) edge[in=0,out=135] node[above]{$$} (b);
		\end{tikzpicture}
	\]
	\[
		\begin{tikzpicture}
			\node (a) at (-3,0) {$a+t$};
			\node (b) at (1,.5) {$((b+_c d)+_e f)+ ((u+_v w) +_x y)$};
			\node (c) at (5,0) {$g+z$};
			\node (M1) at (-2,1.5) {$((M_1 \odot M_2) \odot M_3) \otimes ((N_1 \odot N_2) \odot N_3)=$};
			\path[->,font=\scriptsize]
			(a) edge[in=180,out=45] node[above]{$$} (b)
			(c) edge[in=0,out=135] node[above]{$$} (b);
		\end{tikzpicture}
	\]
	\[
		\begin{tikzpicture}
			\node (a) at (-3,0) {$a+t$};
			\node (b) at (1,.5) {$(b+_c (d+_e f)) + (u+_v (w+_x y))$};
			\node (c) at (5,0) {$g+z$};
			\node (M1) at (-2,1.5) {$(M_1 \odot (M_2 \odot M_3)) \otimes (N_1 \odot (N_2 \odot N_3)) =$};
			\path[->,font=\scriptsize]
			(a) edge[in=180,out=45] node[above]{$$} (b)
			(c) edge[in=0,out=135] node[above]{$$} (b);
		\end{tikzpicture}
	\]
Following the diagram \eqref{diag:MonDblCat} 
around the bottom left gives another sequence of cospans
\[
		\begin{tikzpicture}
		\node (a) at (-3,0) {$a+t$};
		\node (b) at (1,.5) {$((b+w)+_{c+v}(d+w))+_{e+x}(f+y)$};
		\node (c) at (5,0) {$g+z$};
		\node (M1) at (-2,1.5) {$((M_1 \otimes N_1) \odot (M_2 \otimes N_2)) \odot (M_3 \otimes N_3) =$};
		\path[->,font=\scriptsize]
		(a) edge[in=180,out=45] node[above]{$$} (b)
		(c) edge[in=0,out=135] node[above]{$$} (b);
		\end{tikzpicture}
	\]
\[
		\begin{tikzpicture}
			\node (a) at (-3,0) {$a+t$};
			\node (b) at (1,.5) {$(b+w)+_{c+v}((d+w)+_{e+x}(f+y))$};
			\node (c) at (5,0) {$g+z$};
			\node (M1) at (-2,1.5) {$(M_1 \otimes N_1) \odot ((M_2 \otimes N_2) \odot (M_3 \otimes N_3))=$};
			\path[->,font=\scriptsize]
			(a) edge[in=180,out=45] node[above]{$$} (b)
			(c) edge[in=0,out=135] node[above]{$$} (b);
		\end{tikzpicture}
	\]
\[
		\begin{tikzpicture}
			\node (a) at (-3,0) {$a+t$};
			\node (b) at (1,.5) {$(b+u)+_{c+v}((d+_e f) + (w+_x y))$};
			\node (c) at (5,0) {$g+z$};
			\node (M1) at (-2,1.5) {$(M_1 \otimes N_1) \odot ((M_2 \odot M_3) \otimes (N_2 \odot N_3))=$};
			\path[->,font=\scriptsize]
			(a) edge[in=180,out=45] node[above]{$$} (b)
			(c) edge[in=0,out=135] node[above]{$$} (b);
		\end{tikzpicture}
	\]
\[
		\begin{tikzpicture}
			\node (a) at (-3,0) {$a+t$};
			\node (b) at (1,.5) {$(b+_c (d+_e f)) + (u+_v (w+_x y))$};
			\node (c) at (5,0) {$g+z$};
			\node (M1) at (-2,1.5) {$(M_1 \odot (M_2 \odot M_3)) \otimes (N_1 \odot (N_2 \odot N_3)) =$};
			\path[->,font=\scriptsize]
			(a) edge[in=180,out=45] node[above]{$$} (b)
			(c) edge[in=0,out=135] node[above]{$$} (b);
		\end{tikzpicture}
	\]
Putting these together gives the following commutative diagram.
\[
		\begin{tikzpicture}
			\node (a) at (-4,0) {$a+t$};
			\node (b) at (1,0) {$((b+w)+_{c+v}(d+w))+_{e+x}(f+y)$};
			\node (c) at (6,0) {$g+z$};
			\node (a2) at (-4,1.5) {$a+t$};
			\node (b2) at (1,1.5) {$((b+_c d)+ (u+_v w))+_{e+x}(f+y)$};
			\node (c2) at (6,1.5) {$g+z$};
                                \node (a3) at (-4,3) {$a+t$};
			\node (b3) at (1,3) {$((b+_c d)+_e f)+ ((u+_v w) +_x y)$};
			\node (c3) at (6,3) {$g+z$};
                                \node (a4) at (-4,4.5) {$a+t$};
			\node (b4) at (1,4.5) {$(b+_c (d+_e f)) + (u+_v (w+_x y))$};
			\node (c4) at (6,4.5) {$g+z$};
                                \node (a5) at (-4,-1.5) {$a+t$};
			\node (b5) at (1,-1.5) {$(b+w)+_{c+v}((d+w)+_{e+x}(f+y))$};
			\node (c5) at (6,-1.5) {$g+z$};
                                \node (a6) at (-4,-3) {$a+t$};
			\node (b6) at (1,-3) {$(b+u)+_{c+v}((d+_e f) + (w+_x y))$};
			\node (c6) at (6,-3) {$g+z$};
                                \node (a7) at (-4,-4.5) {$a+t$};
			\node (b7) at (1,-4.5) {$(b+_c (d+_e f)) + (u+_v (w+_x y))$};
			\node (c7) at (6,-4.5) {$g+z$};
			\path[->,font=\scriptsize,>=angle 90]
			(a) edge node[above]{$$} (b)
			(c) edge node[above]{$$} (b)
                                (a2) edge node[above]{$$} (b2)
			(c2) edge node[above]{$$} (b2)
                                (a) edge node[above]{$$} (a2)
                                (b) edge node[above]{$$} (b2)
			(c) edge node[above]{$$} (c2)
                                (a3) edge node[above]{$$} (b3)
			(c3) edge node[above]{$$} (b3)
                                (a2) edge node[above]{$$} (a3)
                                (b2) edge node[above]{$$} (b3)
			(c2) edge node[above]{$$} (c3)
                                (a4) edge node[above]{$$} (b4)
			(c4) edge node[above]{$$} (b4)
                                (a3) edge node[above]{$$} (a4)
                                (b3) edge node[above]{$$} (b4)
			(c3) edge node[above]{$$} (c4)
                                (a5) edge node[above]{$$} (b5)
			(c5) edge node[above]{$$} (b5)
                                (a) edge node[above]{$$} (a5)
                                (b) edge node[above]{$$} (b5)
			(c) edge node[above]{$$} (c5)
                                (a6) edge node[above]{$$} (b6)
			(c6) edge node[above]{$$} (b6)
                                (a5) edge node[above]{$$} (a6)
                                (b5) edge node[above]{$$} (b6)
			(c5) edge node[above]{$$} (c6)
                                (a7) edge node[above]{$$} (b7)
			(c7) edge node[above]{$$} (b7)
                                (a6) edge node[above]{$$} (a7)
                                (b6) edge node[above]{$$} (b7)
			(c6) edge node[above]{$$} (c7);
		\end{tikzpicture}
	\]
The vertical 1-morphisms on the left and right 
are the the respective
identity spans on $a+t$ and $g+z$. 
The vertical 1-morphisms in the center 
are isomorphism classes of monic spans 
where each leg is given by 
a universal map between two colimits 
of the same diagram. 
The horizontal 1-morphisms are 
given by universal maps into coproducts and pushouts.
The top cospan is the same as the bottom cospan, 
making a bracelet-like figure 
in which all faces commute.  
The other diagrams witnessing coherence 
are given in a similar fashion.
\end{proof}

\begin{lem}
\label{lem:SpanCospanIsofibrant}
	The symmetric monoidal double category $\dblmonspcsp{T}$ is isofibrant.  
\end{lem}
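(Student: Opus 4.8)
The plan is to verify Definition~\ref{def:Fibrant} directly. I would begin by observing that in $\dblmonspcsp{T}$ \emph{every} vertical $1$-morphism is already invertible: it is an isomorphism class of a span both of whose legs are invertible, hence is determined by a single isomorphism of $\mathbf{T}$, and its inverse is obtained by interchanging the two legs. So isofibrancy coincides here with fibrancy, and it suffices to equip an arbitrary vertical $1$-morphism $f \from A \to B$ — which I represent by the span $A \xleftarrow{\id_A} A \xrightarrow{f} B$, i.e.\ simply an isomorphism of $\mathbf{T}$ — with a companion and a conjoint.

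For the companion, I would take $\widehat{f} \from A \hto B$ to be the cospan $A \xrightarrow{f} B \xleftarrow{\id_B} B$ with apex $B$, that is, the image of $f$ under the canonical embedding $\mathbf{T} \hookrightarrow \bicspmap{T}$. The two $2$-morphisms demanded by Definition~\ref{def:CompanionConjoint} I would take to be the spans of cospans whose apex cospans are $U_A$ and $\widehat{f}$ respectively, with every newly introduced structure map an identity except for a single copy of $f$: concretely, the square from $U_A$ to $\widehat{f}$ with left leg $\id_A$ and right leg $f$, and the square from $\widehat{f}$ to $U_B$ with left leg $f$ and right leg $\id_B$. Each is a legitimate monic span of cospans since $f$ (being an isomorphism) and all identities are monic. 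I would then check the two equations of \eqref{eq:CompanionEq}: the vertical composite of the two squares has apex object the pullback $A \times_B B$ formed along $f$ and $\id_B$, which is canonically $A$, and under this identification the composite is exactly $U_f$; the horizontal composite has apex object the pushout $A +_A B$ formed along $\id_A$ and $f$, which is canonically $B$, and under this identification — together with the unitors of $\dblmonspcsp{T}$ — the composite is the identity $2$-morphism on $\widehat{f}$.

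For the conjoint I would argue by symmetry. Reversing every cospan, i.e.\ horizontally flipping each span-of-cospans diagram, is an involutive isomorphism of double categories $\dblmonspcsp{T} \xrightarrow{\ \sim\ } \dblmonspcsp{T}^{h\cdot\mathrm{op}}$ that is the identity on objects and on vertical $1$-morphisms. Since a conjoint of $f$ in $\dblmonspcsp{T}$ is by definition a companion of $f$ in $\dblmonspcsp{T}^{h\cdot\mathrm{op}}$, rerunning the companion construction inside $\dblmonspcsp{T}^{h\cdot\mathrm{op}}$ and transporting back furnishes the conjoint $\check f = (B \xrightarrow{\id_B} B \xleftarrow{f} A)$ of $f$. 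This establishes that $\dblmonspcsp{T}$ is fibrant, hence isofibrant; combined with Lemma~\ref{lem:SpanCospanSM} this is exactly what is needed to invoke Theorem~\ref{thm:DoubleGivesBi}.

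I expect the one genuinely fiddly step to be the verification of \eqref{eq:CompanionEq}: though conceptually transparent, it requires carefully matching the canonical comparison maps of the relevant pullbacks and pushouts — each of which has an identity leg and so is a projection or coprojection up to isomorphism — against the associativity and unit constraints of $\dblmonspcsp{T}$, and keeping the orientation conventions for the spans straight throughout. No real obstacle should arise, however, since every object and morphism occurring in those diagrams is assembled from $A$, $B$, identities, and the single isomorphism $f$.
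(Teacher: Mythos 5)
Your proposal is correct and follows essentially the same route as the paper: the companion of a vertical $1$-morphism is the cospan obtained by inverting the legs of the defining span, the two structure squares are the evident ones built from identities and the isomorphism itself, and the conjoint is obtained by reversing the cospan. The only differences are cosmetic — you normalize the span to have an identity left leg and spell out the verification of \eqref{eq:CompanionEq} (correctly identifying the pullback $A\times_B B\cong A$ and pushout $A+_A B\cong B$), which the paper leaves implicit.
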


\begin{proof}
	The companion of a vertical 1-morphism 
		$f = (b \from a \tospan c)$ 
	is given by
		$\widehat{f} = (b \from a \tocospan c)$ 
	whose legs are the inverses of the legs of $f$. The required 2-morphisms are given by
	\[
	\raisebox{-0.5\height}{
	\begin{tikzpicture}
		\node (A) at (0,2) {$a$};
		\node (A') at (0,1) {$b$};
		\node (A'') at (0,0) {$c$};
		\node (B) at (1,2) {$b$};
		\node (B') at (1,1) {$c$};
		\node (B'') at (1,0) {$c$};
		\node (C) at (2,2) {$c$};
		\node (C') at (2,1) {$c$};
		\node (C'') at (2,0) {$c$};
		\path[->,font=\scriptsize,>=angle 90]
		(A) edge node[above]{} (B)
		(A') edge node[above]{} (B')
		(A'') edge node[above]{} (B'')
		(C) edge node[above]{} (B)
		(C') edge node[above]{} (B')
		(C'') edge node[above]{} (B'')
		(A') edge node[left]{} (A)
		(A') edge node[left]{} (A'')
		(B') edge node[left]{} (B)
		(B') edge node[left]{} (B'')
		(C') edge node[left]{} (C)
		(C') edge node[left]{} (C'');
	\end{tikzpicture}
	}
	\t{ and }
	\raisebox{-0.5\height}{
	\begin{tikzpicture}
		\node (A) at (0,2) {$a$};
		\node (A') at (0,1) {$a$};
		\node (A'') at (0,0) {$a$};
		\node (B) at (1,2) {$a$};
		\node (B') at (1,1) {$a$};
		\node (B'') at (1,0) {$b$};
		\node (C) at (2,2) {$a$};
		\node (C') at (2,1) {$b$};
		\node (C'') at (2,0) {$c$};
		\path[->,font=\scriptsize,>=angle 90]
		(A) edge node[above]{} (B)
		(A') edge node[above]{} (B')
		(A'') edge node[above]{} (B'')
		(C) edge node[above]{} (B)
		(C') edge node[above]{} (B')
		(C'') edge node[above]{} (B'')
		(A') edge node[left]{} (A)
		(A') edge node[left]{} (A'')
		(B') edge node[left]{} (B)
		(B') edge node[left]{} (B'')
		(C') edge node[left]{} (C)
		(C') edge node[left]{} (C'');
	\end{tikzpicture}
	}
	\]
	The conjoint of $f$ is given by $\check{f} = \widehat{f}^{\text{op}}$.
\end{proof}


The benefit of laying down this groundwork
is that the following theorem
now follows from applying
Theorem \ref{thm:DoubleGivesBi}
to $\bimonspcsp{T}$.

\begin{thm}
	\label{thm:SpansCospasAreSMBicat}
	$\bimonspcsp{T}$ is a symmetric monoidal bicategory.
\end{thm}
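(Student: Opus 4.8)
The plan is to deduce this immediately from Shulman's Theorem~\ref{thm:DoubleGivesBi}, since the genuine content has already been extracted in the preceding lemmas. By Lemma~\ref{lem:SpanCospanSM} the pseudo double category $\dblmonspcsp{T}$ is symmetric monoidal, and by Lemma~\ref{lem:SpanCospanIsofibrant} it is isofibrant, so Theorem~\ref{thm:DoubleGivesBi} applies directly and produces a symmetric monoidal bicategory $H(\dblmonspcsp{T})$. The only thing left to do is to observe that $H(\dblmonspcsp{T})$ is the bicategory $\bimonspcsp{T}$ described in Section~\ref{sec:Span cospan bicats}, and to transport the symmetric monoidal structure across this identification.

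To make the identification precise I would compare the two bicategories cell by cell. Both have the objects of $\cat{T}$ for objects and cospans in $\cat{T}$ for $1$-morphisms, and in both cases horizontal composition of $1$-morphisms is composition of cospans by pushout; for $H(\dblmonspcsp{T})$ this is exactly the action of $\odot$ on the objects of $\dblcat{M}_1$ from Lemma~\ref{lem:SpanCospanDoubleCat}. For $2$-morphisms, recall that a $2$-cell of $H(\dblmonspcsp{T})$ is a \emph{globular} $2$-morphism of $\dblmonspcsp{T}$, that is, an isomorphism class of spans of cospans whose source and target vertical $1$-morphisms are identity spans. Choosing the representative in which the left and right columns of the $3\times 3$ diagram of Definition~\ref{def:DblCatMonSpanCsp} are literally identity spans collapses that diagram to precisely a monic span of cospans in the sense of Section~\ref{sec:Span cospan bicats}, and this sets up a bijection on isomorphism classes. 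Under this bijection the vertical composition of globular cells (composition in the arrow category $\dblcat{M}_1$, computed by pullback) matches the vertical composition of $2$-morphisms in $\bimonspcsp{T}$, while $\odot$ restricted to globular cells matches horizontal composition of $2$-morphisms; and the associator and unitors of $H(\dblmonspcsp{T})$ are the globular natural isomorphisms $\alpha$, $\lambda$, $\rho$ of $\dblmonspcsp{T}$, which are the very pushout-universality isomorphisms used to define $\bimonspcsp{T}$. Thus $H(\dblmonspcsp{T})$ and $\bimonspcsp{T}$ coincide, and the symmetric monoidal structure furnished by Theorem~\ref{thm:DoubleGivesBi} is the one given by coproducts on objects, cospans, and spans of cospans, exactly as in Lemma~\ref{lem:SpanCospanSM}.

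The step that needs care, rather than being pure bookkeeping, is checking that the above identification of $2$-cells is compatible with \emph{all} of the data that Shulman's construction transports, not just with the composition operations: the braiding and symmetry $2$-cells, and the various invertible modifications witnessing the coherence axioms of a symmetric monoidal bicategory, must restrict from $\dblmonspcsp{T}$ to the expected globular maps between the relevant pushouts and coproducts. Since every such datum in $\dblmonspcsp{T}$ is obtained from the universal property of a colimit in the topos $\cat{T}$, uniqueness of the comparison maps forces each restriction to be the canonical one, so this verification, although lengthy, is routine and constitutes no real obstacle. I would therefore write the proof as: $\dblmonspcsp{T}$ is an isofibrant symmetric monoidal double category by Lemmas~\ref{lem:SpanCospanSM} and~\ref{lem:SpanCospanIsofibrant}, its horizontal edge bicategory is $\bimonspcsp{T}$, and Theorem~\ref{thm:DoubleGivesBi} then gives the claim.
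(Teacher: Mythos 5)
Your proposal is correct and follows exactly the same route as the paper: the paper's proof is the one-line observation that $\dblmonspcsp{T}$ is an isofibrant symmetric monoidal double category by the preceding lemmas, so Shulman's Theorem~\ref{thm:DoubleGivesBi} yields $\bimonspcsp{T}$ as its horizontal edge bicategory. Your additional care in identifying the globular $2$-cells of $H(\dblmonspcsp{T})$ with the monic spans of cospans of Section~\ref{sec:Span cospan bicats} is a detail the paper leaves implicit, but it is the right check and does not change the argument.
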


\begin{proof}
We have shown that $\mathbf{\mathbb{M} onicSp(Csp(T))}$ is an isofibrant symmetric monoidal pseudo double category and so we obtain $\mathbf{MonicSp(Csp(T))}$ as the horizontal edge bicategory of $\mathbf{\mathbb{M} onicSp(Csp(T))}$ by Shulman's result.
\end{proof}

It remains to show that this bicategory
is compact closed. 
We start with the following lemma.

\begin{lem}
\label{lem:PushoutDiagram}
	The diagram
	\[
		\begin{tikzpicture}
			\node (UL) at (0,1) {$X+X+X$};
			\node (LL) at (0,0) {$X+X$};
			\node (UR) at (3,1) {$X+X$};
			\node (LR) at (3,0) {$X$};
			\path[->,font=\scriptsize,>=angle 90]
			(UL) edge node[above] {$X+\nabla$} (UR)
			(UL) edge node[left] {$\nabla +X$} (LL)
			(UR) edge node[right] {$\nabla$} (LR)
			(LL) edge node[above] {$\nabla$} (LR);
		\end{tikzpicture}
	\]
	is a pushout square.
\end{lem}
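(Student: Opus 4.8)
The plan is to verify the universal property of the pushout directly, using only the universal property of the coproduct; the topos hypothesis plays no role, and the proof is valid in any category with finite coproducts. The only real content is bookkeeping the coprojections, so I will set up notation carefully and then the verification is immediate.

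First I would fix notation. Write $\iota_1,\iota_2,\iota_3\from X\to X+X+X$ for the three coprojections, write $j_1,j_2\from X\to X+X$ for the coprojections of the copy of $X+X$ that is the codomain of $\nabla+X$, and $k_1,k_2\from X\to X+X$ for those of the copy that is the codomain of $X+\nabla$. Unwinding the definitions of $\nabla+X$ and $X+\nabla$ gives $(\nabla+X)\iota_1 = (\nabla+X)\iota_2 = j_1$, $(\nabla+X)\iota_3 = j_2$, and $(X+\nabla)\iota_1 = k_1$, $(X+\nabla)\iota_2 = (X+\nabla)\iota_3 = k_2$, while $\nabla j_1 = \nabla j_2 = \id_X = \nabla k_1 = \nabla k_2$. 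In particular both composites $X+X+X\to X$ around the square equal the ternary fold map $X+X+X\to X$, so the square commutes.

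Next, given any cocone on the span $X+X \xleftarrow{\nabla+X} X+X+X \xrightarrow{X+\nabla} X+X$, consisting of $f\from X+X\to Y$ (on the $j$-copy) and $g\from X+X\to Y$ (on the $k$-copy) with $f\circ(\nabla+X) = g\circ(X+\nabla)$, I would precompose this equation with $\iota_1$, $\iota_2$, $\iota_3$ in turn. This yields $f j_1 = g k_1$, then $f j_1 = g k_2$, then $f j_2 = g k_2$; hence the four arrows $f j_1, f j_2, g k_1, g k_2\from X\to Y$ all coincide, and I call this common arrow $u\from X\to Y$. Then $f = [\,f j_1, f j_2\,] = u\circ\nabla$ and likewise $g = u\circ\nabla$, so $u$ is a mediating morphism for the cocone. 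For uniqueness, any $u'\from X\to Y$ with $u'\circ\nabla = f$ satisfies $u' = u'\circ\nabla\circ j_1 = f\circ j_1 = u$. Therefore the square is a pushout.

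\textbf{Main obstacle.} There is essentially none: the argument is a routine diagram chase, and the only point requiring care is tracking which coprojection of $X+X$ each $\iota_i$ is sent to under $\nabla+X$ versus $X+\nabla$. If a more structural phrasing is preferred, one can instead apply $\Hom(-,Y)$ to the square for an arbitrary object $Y$: it becomes the square of finite powers of $\Hom(X,Y)$ in which the two maps out of $\Hom(X,Y)$ are the diagonal and the two maps into $\Hom(X,Y)^{3}$ duplicate the first or the last coordinate, whose pullback collapses to the diagonal copy of $\Hom(X,Y)$; since this holds for every $Y$, the original square is a pushout. Either route takes only a few lines.
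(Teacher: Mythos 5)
Your proof is correct and follows essentially the same strategy as the paper's: verify the universal property directly by precomposing the cocone equation with the coprojections of $X+X+X$ to identify the mediating map, then check uniqueness via $\nabla\circ j_1 = \id_X$. Your version is in fact slightly more complete, since you precompose with all three coprojections and explicitly verify $f = u\circ\nabla = g$ (the paper only uses the middle inclusion and checks uniqueness), and the alternative $\Hom(-,Y)$ argument you sketch is a valid, if unnecessary, second route.
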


\begin{proof}
	Suppose that we have maps 
		$f,g \from X+X \to Y$
	forming a cocone over the span 
	inside the above diagram. 
	Let $\iota \from X \to X+X+X$ include $X$ 
	into the middle copy. 
	Observe that 
		$\ell \coloneqq (\nabla + X) \circ \iota$ 
	and 
		$r \coloneqq (X + \nabla) \circ \iota$ 
	are, respectively, the left and right inclusions 
		$X \to X+X$. 
	Then $f \circ \ell = g \circ r$ is a map $X \to Y$, 
	which we claim is the unique map making 
	the required diagram commute. 
	Indeed, given $h \from X \to Y$ such that 
		$f = h \circ \nabla = g$, 
	then $g \circ r = f \circ \ell = h \circ \nabla \circ \ell = h$.
\end{proof}

\begin{thm}
	\label{thm:SpansMapsAreCCBicat}
	The symmetric monoidal bicategory $\bimonspcsp{T}$ is compact closed.
\end{thm}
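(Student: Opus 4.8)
The plan is to show that every object of $\bimonspcsp{T}$ is part of a \emph{coherent} dual pair in the sense of Definition \ref{def:DualPairBicat}; together with Definition \ref{def:CompClosdBicat} and Theorem \ref{thm:SpansCospasAreSMBicat} this yields the statement. By Pstr\k{a}gowski's strictification (Theorem \ref{thm:StrictingDualPairs}) it is in fact enough to produce an ordinary dual pair $(L,R,e,c,\alpha,\beta)$, since the cusp isomorphism $\beta$ may then be replaced by a $\beta'$ for which the swallowtail diagrams of Figure \ref{fig:Swallowtail} commute. So the swallowtail equations never need to be confronted directly.

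Every object will be self-dual. Fix $X\in\cat{T}$ and set $L=R=X$. Since the monoidal product on objects is coproduct, $XX=X+X$ and the monoidal unit $I$ is the initial object $0$. Take the counit $e\from XX\to I$ to be the cospan $(X\from X+X\tocospan 0)$ whose left leg is the fold map $\nabla\from X+X\to X$ and whose right leg is the unique map $0\to X$, and dually take the unit $c\from I\to XX$ to be $(X\from 0\tocospan X+X)$, with legs $0\to X$ and $\nabla$. It remains to supply invertible $2$-morphisms $\alpha$ and $\beta$ witnessing the two triangle identities up to isomorphism.

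For $\alpha$ we compare the composite along $L\cong LI\xrightarrow{Lc}L(RL)\cong(LR)L\xrightarrow{eL}IL\cong L$ with $\id_L$. Expanding $Lc$ and $eL$ as coproducts of cospans and cancelling the unitor and associator data of $\bimonspcsp{T}$, which on these objects reduce to identity cospans because the tensor is a coproduct, the composite becomes the cospan composite of $Lc$ and $eL$, whose apex is the pushout of $X+X\xleftarrow{\nabla+X}X+X+X\xrightarrow{X+\nabla}X+X$. By Lemma \ref{lem:PushoutDiagram} this pushout is $X$, with both coprojections equal to $\nabla$; precomposing with the two inclusions $\iota_1,\iota_2\from X\to X+X$ and using $\nabla\iota_1=\nabla\iota_2=\id_X$ shows that the composite is the identity cospan $(X\from X\tocospan X)$. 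We let $\alpha$ be the resulting globular $2$-isomorphism; it is represented by a span of cospans all of whose legs are identities, hence monic and invertible. The cusp isomorphism $\beta$ for the other triangle is obtained by the identical computation with the two copies of $X$ transposed, which is legitimate because coproduct is symmetric.

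This furnishes a dual pair $(X,X,e,c,\alpha,\beta)$ for every object $X$; Theorem \ref{thm:StrictingDualPairs} then replaces $\beta$ by a $\beta'$ making $(X,X,e,c,\alpha,\beta')$ coherent. Hence every object of $\bimonspcsp{T}$ lies in a coherent dual pair, and since $\bimonspcsp{T}$ is symmetric monoidal by Theorem \ref{thm:SpansCospasAreSMBicat}, it is compact closed. The only genuine work is the bookkeeping in the third paragraph: unwinding $Lc$ and $eL$ into coproducts of cospans, tracking the unitor and associator cells, and recognising the iterated pushout that appears as an instance of Lemma \ref{lem:PushoutDiagram}. Once that identification is made, invertibility of $\alpha$ and $\beta$ is automatic because every leg in sight is an identity, and the appeal to Theorem \ref{thm:StrictingDualPairs} is unobstructed.
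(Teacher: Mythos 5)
Your proposal is correct and follows essentially the same route as the paper: the same self-duality with counit and unit built from the fold map $\nabla$, the same identification of the triangle composite as the identity cospan via Lemma \ref{lem:PushoutDiagram}, and the same appeal to Theorem \ref{thm:StrictingDualPairs} to avoid the swallowtail equations. No gaps.
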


\begin{proof}
	First we show that each object is its own dual. 
	For an object $X$,  
	define the counit 
		$e \from X + X \tocospan 0$ 
	and unit 
		$c \from 0 \tocospan X+X$ 
	to be the following cospans:
	\[
		e = (X+X \xto{\nabla} X \gets 0), 
		\quad \quad 
		c = (0 \to X \xleftarrow{\nabla} X+X).
	\]
	Next we define the cusp isomorphisms, 
		$\alpha$ and $\beta$.
	Note that $\alpha$ is a 2-morphism 
	whose domain is the composite 
	\[
		X \xto{\ell}
		X+X \xleftarrow{X+\nabla}
		X+X+X \xto{\nabla +X}
		X+X \xleftarrow{r}
		X
	\]
	and whose codomain is the identity cospan on $X$.  
	From Lemma \ref{lem:PushoutDiagram} 
	we have the equations
		$\nabla+X = \ell \circ \nabla$ 
	and 
		$X + \nabla = r \circ \nabla$ 
	from which it follows that the domain of $\alpha$ is 
	the identity cospan on $X$, and  
	the codomain of $\beta$ is also 
	the identity cospan on $X$
	obtained as the composite 
	\[
		X \xto{r}
		X+X \xleftarrow{\nabla+X}
		X+X+X \xto{X+\nabla}
		X+X \xleftarrow{\ell}
		X
	\]
	Take $\alpha$ and $\beta$ each 
	to be the isomorphism class determined by the identity 2-morphism on $X$, which in particular is a monic span of cospans.
	Thus we have a dual pair 
		$(X,X,e,c,\alpha,\beta)$. 
	By Theorem 
		\ref{thm:StrictingDualPairs}, 
	there exists a cusp isomorphism $\beta'$ 
	such that 
		$(X,X,e,c,\alpha,\beta')$ 
	is a coherent dual pair, and thus $\mathbf{MonicSp(Csp(T))}$ is compact closed.  
\end{proof}

\section{An application} 
\label{sec:Applications}

The primary motivation for constructing $\bimonspcsp{T}$
is to provide a formalism in which to study networks
that have inputs and outputs.
Because of the symmetric monoidal and compact structure,
we know that networks can be placed side by side 
by `tensoring' and we can formally swap the inputs
and outputs by compactness. 
For instance, the first author applied this formalism
to the zx-calculus \cite{Cic_zx}, which is a graphical
language used for expressing operations
on a pair of qubits.

In fact, this construction is a natural 
fit for other graphical calculi, too. 
The following example shows how to use 
our construction to rewrite a network. 
In particular, we replace
a node with a more complex network
and see how compactness affects
this situation.  

\subsection{Replacing a node in a network}
\label{subsec:Replacing a node in a network}

The following example was inspired 
by a comment of Michael Shulman \cite{Shul} 
on the n-Cafe.
The example involves replacing a particular node 
in some given network with another network, 
possibly more complex, 
whose inputs and outputs 
coincide with those of the node. 
For simplicity, we work with open graphs.
First, we should place ourselves
in the context of the 
compact closed sub-bicategory
$\cat{Rewrite}$ of $\bimonspcsp{Graph}$ where $\mathbf{Rewrite}$ is the sub-bicategory
that is 1-full and 2-full on the edgeless graphs as objects.
The idea is that a 1-morphism is 
an \textbf{open graph}.
That is, a graph with 
\emph{input} and \emph{output} nodes 
chosen by the legs of the cospan. 
The 2-morphisms contain 
all possible ways for an open graph 
to be rewritten into another open graph
while preserving the inputs and outputs.

However, $\cat{Rewrite}$ is really only 
interesting as an ambient bicategory.  
We are particularly interested in 
sub-bicategories freely generated by 
various collections of 
1-morphisms and 2-morphisms. 
The collection considered
depends on our interests.
The generators of the sub-bicategory $\mathbf{Rewrite}$
are chosen to provide a syntax
for whatever types of networks we wish to study.

Suppose that we are working 
within a network where
whenever we see 
\[
\begin{tikzpicture}
%
%
\begin{scope}[shift={(0.1,0)}]
\draw[rounded corners,line width=0.3mm, gray] (-2.8,0) rectangle (-2,2);
\node [style=circle,draw=black] at (-2.4,1) {\scriptsize $i$};
\end{scope}
%
%
\begin{scope}[shift={(0.9,-3)}]
\draw[rounded corners,line width=0.3mm, gray] (-2,3) rectangle (0.4,5);
\node [style=circle,draw=black] (1) at (-0.8,4) {\scriptsize $x$};
\node [style=circle,draw=black] (3) at (0,4) {\scriptsize $o$};
\node [style=circle,draw=black] (4) at (-1.6,4) {\scriptsize $i$};
\draw [->-] (1) to (3);
\draw [->-] (4) to (1);
\end{scope}
%
%
\begin{scope}[shift={(0.1,0)}]
\draw[rounded corners,line width=0.3mm, gray] (2,0) rectangle (2.8,2);
\node [style=circle,draw=black] at (2.4,1) {\scriptsize $o$};
\end{scope}
%
%
\node (v1) at (-1.9,1) {};
\node (v2) at (-1.1,1) {};
\draw [thick,->]  (v1) edge (v2);
\node (v3) at (2.1,1) {};
\node (v4) at (1.3,1) {};
\draw [thick,->] (v3) edge (v4);
\end{tikzpicture}
\]
we want to replace it with the following:
\[
\begin{tikzpicture}
%
%
\begin{scope}[shift={(-0.3,0)}]
\draw[rounded corners,line width=0.3mm, gray] (-2.8,0) rectangle (-2,2);
\node [style=circle,draw=black] at (-2.4,1) {\scriptsize $i$};
\end{scope}
%
%
\begin{scope}[shift={(0.9,-3)}]
\draw[rounded corners,line width=0.3mm, gray] (-2.4,3) rectangle (1.3,5);
\node [style=circle,draw=black] (4) at (-2.1,4) {\scriptsize $i$};
\node [style=circle,draw=black] (3) at (1,4) {\scriptsize $o$};
\node [style=circle,draw=black] (v5) at (-1.3,4) {\scriptsize $a$};
\node [style=circle,draw=black] (v6) at (-0.6,4.5) {\scriptsize $b$};
\node [style=circle,draw=black] (v7) at (-0.6,3.5) {\scriptsize $c$};
\node [style=circle,draw=black] (v8) at (0.1,4) {\scriptsize $d$};
\draw [->-]  (4) to (v5);
\draw [->-] (v5) to (v6);
\draw [->-] (v5) to (v7);
\draw [->-] (v6) to (v8);
\draw [->-] (v7) to (v8);
\draw [->-] (v8) to (3);
\end{scope}
%
%
\begin{scope}[shift={(1,0)}]
\draw[rounded corners,line width=0.3mm, gray] (2,0) rectangle (2.8,2);
\node [style=circle,draw=black] at (2.4,1) {\scriptsize $o$};
\end{scope}
%
%
\node (v1) at (-2.3,1) {};
\node (v2) at (-1.5,1) {};
\draw [thick,->]  (v1) edge (v2);
\node (v3) at (3,1) {};
\node (v4) at (2.2,1) {};
\draw [thick,->] (v3) edge (v4);
\end{tikzpicture}
\]
This corresponds to having
the $2$-morphism
\[ 
\begin{tikzpicture}
%
%
\begin{scope}[shift={(-0.3,0)}]
\draw[rounded corners,line width=0.3mm, gray] (-2.8,0) rectangle (-2,2);
\node [style=circle,draw=black] at (-2.4,1) {\scriptsize $i$};
\end{scope}
%
%
\begin{scope}[shift={(0.5,-3)}]
\draw[rounded corners,line width=0.3mm, gray] (-2,3) rectangle (1.7,5);
\node [style=circle,draw=black] (3) at (1.3,4) {\scriptsize $o$};
\node [style=circle,draw=black] (4) at (-1.6,4) {\scriptsize $i$};
\end{scope}
%
%
\begin{scope}[shift={(0.8,-0.4)}]
\draw[rounded corners,line width=0.3mm, gray] (-2.3,3) rectangle (1.4,4.9);
\node [style=circle,draw=black] (1) at (-0.5,4) {\scriptsize $x$};
\node [style=circle,draw=black] (3) at (1,4) {\scriptsize $o$};
\node [style=circle,draw=black] (4) at (-1.9,4) {\scriptsize $i$};
\draw [->-] (1) to (3);
\draw [->-] (4) to (1);
\end{scope}
%
%
\begin{scope}[shift={(0.9,-5.6)}]
\draw[rounded corners,line width=0.3mm, gray] (-2.4,3) rectangle (1.3,5);
\node [style=circle,draw=black] (4) at (-2.1,4) {\scriptsize $i$};
\node [style=circle,draw=black] (3) at (1,4) {\scriptsize $o$};
\node [style=circle,draw=black] (v5) at (-1.3,4) {\scriptsize $a$};
\node [style=circle,draw=black] (v6) at (-0.6,4.5) {\scriptsize $b$};
\node [style=circle,draw=black] (v7) at (-0.6,3.5) {\scriptsize $c$};
\node [style=circle,draw=black] (v8) at (0.1,4) {\scriptsize $d$};
\draw [->-]  (4) to (v5);
\draw [->-] (v5) to (v6);
\draw [->-] (v5) to (v7);
\draw [->-] (v6) to (v8);
\draw [->-] (v7) to (v8);
\draw [->-] (v8) to (3);
\end{scope}
%
%
\begin{scope}[shift={(1,0)}]
\draw[rounded corners,line width=0.3mm, gray] (2,0) rectangle (2.8,2);
\node [style=circle,draw=black] at (2.4,1) {\scriptsize $o$};
\end{scope}
%
%
\node (v1) at (-2.3,1) {};
\node (v2) at (-1.5,1) {};
\node (v3) at (3,1) {};
\node (v4) at (2.2,1) {};
\node (v9) at (-1.5,3.5) {};
\node (v10) at (-1.5,-1.5) {};
\node (v11) at (2.2,3.5) {};
\node (v12) at (2.2,-1.5) {};
\node (v14) at (0.3,2.6) {};
\node (v13) at (0.3,2) {};
\node (v15) at (0.3,0) {};
\node (v16) at (0.3,-0.6) {};
\draw [thick,->] (v1) edge [out=0,in=180] (v9);
\draw [thick,->] (v1) edge [out=0,in=180] (v10);
\draw [thick,->]  (v1) edge (v2);
\draw [thick,->] (v3) edge [out=180,in=0] (v11);
\draw [thick,->] (v3) edge [out=180,in=0] (v12);
\draw [thick,->] (v3) edge (v4);
\draw [thick,->] (v13) edge (v14);
\draw [thick,->] (v15) edge (v16);
\end{tikzpicture}
\]
where the rewritten network is obtained by taking a
double pushout \cite{Cic}. By compactness, once we have
this $2$-morphism, we can swap the
roles of the inputs and outputs
to obtain the following $2$-morphism:
\[ 
\begin{tikzpicture}
%
%
\begin{scope}[shift={(-0.3,0)}]
\draw[rounded corners,line width=0.3mm, gray] (-2.8,0) rectangle (-2,2);
\node [style=circle,draw=black] at (-2.4,1) {\scriptsize $o$};
\end{scope}
%
%
\begin{scope}[shift={(0.5,-3)}]
\draw[rounded corners,line width=0.3mm, gray] (-2,3) rectangle (1.7,5);
\node [style=circle,draw=black] (3) at (1.3,4) {\scriptsize $i$};
\node [style=circle,draw=black] (4) at (-1.6,4) {\scriptsize $o$};
\end{scope}
%
%
\begin{scope}[shift={(0.8,-0.4)}]
\draw[rounded corners,line width=0.3mm, gray] (-2.3,3) rectangle (1.4,4.9);
\node [style=circle,draw=black] (1) at (-0.5,4) {\scriptsize $x$};
\node [style=circle,draw=black] (3) at (1,4) {\scriptsize $i$};
\node [style=circle,draw=black] (4) at (-1.9,4) {\scriptsize $o$};
\draw [->-] (1) to (3);
\draw [->-] (4) to (1);
\end{scope}
%
%
\begin{scope}[shift={(0.9,-5.6)}]
\draw[rounded corners,line width=0.3mm, gray] (-2.4,3) rectangle (1.3,5);
\node [style=circle,draw=black] (4) at (-2.1,4) {\scriptsize $o$};
\node [style=circle,draw=black] (3) at (1,4) {\scriptsize $i$};
\node [style=circle,draw=black] (v5) at (-1.3,4) {\scriptsize $a$};
\node [style=circle,draw=black] (v6) at (-0.6,4.5) {\scriptsize $b$};
\node [style=circle,draw=black] (v7) at (-0.6,3.5) {\scriptsize $c$};
\node [style=circle,draw=black] (v8) at (0.1,4) {\scriptsize $d$};
\draw [->-]  (4) to (v5);
\draw [->-] (v5) to (v6);
\draw [->-] (v5) to (v7);
\draw [->-] (v6) to (v8);
\draw [->-] (v7) to (v8);
\draw [->-] (v8) to (3);
\end{scope}
%
%
\begin{scope}[shift={(1,0)}]
\draw[rounded corners,line width=0.3mm, gray] (2,0) rectangle (2.8,2);
\node [style=circle,draw=black] at (2.4,1) {\scriptsize $i$};
\end{scope}
%
%
\node (v1) at (-2.3,1) {};
\node (v2) at (-1.5,1) {};
\node (v3) at (3,1) {};
\node (v4) at (2.2,1) {};
\node (v9) at (-1.5,3.5) {};
\node (v10) at (-1.5,-1.5) {};
\node (v11) at (2.2,3.5) {};
\node (v12) at (2.2,-1.5) {};
\node (v14) at (0.3,2.6) {};
\node (v13) at (0.3,2) {};
\node (v15) at (0.3,0) {};
\node (v16) at (0.3,-0.6) {};
\draw [thick,->] (v1) edge [out=0,in=180] (v9);
\draw [thick,->] (v1) edge [out=0,in=180] (v10);
\draw [thick,->]  (v1) edge (v2);
\draw [thick,->] (v3) edge [out=180,in=0] (v11);
\draw [thick,->] (v3) edge [out=180,in=0] (v12);
\draw [thick,->] (v3) edge (v4);
\draw [thick,->] (v13) edge (v14);
\draw [thick,->] (v15) edge (v16);
\end{tikzpicture}
\]
Hence there is no
substantial difference
between inputs and outputs.
Also, we can flip this diagram vertically to 
obtain the following $2$-morphism:
\[
\begin{tikzpicture}
%
%
\begin{scope}[shift={(-0.3,0)}]
\draw[rounded corners,line width=0.3mm, gray] (-2.8,0) rectangle (-2,2);
\node [style=circle,draw=black] at (-2.4,1) {\scriptsize $o$};
\end{scope}
%
%
\begin{scope}[shift={(0.5,-3)}]
\draw[rounded corners,line width=0.3mm, gray] (-2,3) rectangle (1.7,5);
\node [style=circle,draw=black] (3) at (1.3,4) {\scriptsize $i$};
\node [style=circle,draw=black] (4) at (-1.6,4) {\scriptsize $o$};
\end{scope}
%
%
\begin{scope}[shift={(0.8,-5.5)}]
\draw[rounded corners,line width=0.3mm, gray] (-2.3,3) rectangle (1.4,4.9);
\node [style=circle,draw=black] (1) at (-0.5,4) {\scriptsize $x$};
\node [style=circle,draw=black] (3) at (1,4) {\scriptsize $i$};
\node [style=circle,draw=black] (4) at (-1.9,4) {\scriptsize $o$};
\draw [->-] (1) to (3);
\draw [->-] (4) to (1);
\end{scope}

%
%
\begin{scope}[shift={(0.9,-0.4)}]
\draw[rounded corners,line width=0.3mm, gray] (-2.4,3) rectangle (1.3,5);
\node [style=circle,draw=black] (4) at (-2.1,4) {\scriptsize $o$};
\node [style=circle,draw=black] (3) at (1,4) {\scriptsize $i$};
\node [style=circle,draw=black] (v5) at (-1.3,4) {\scriptsize $a$};
\node [style=circle,draw=black] (v6) at (-0.6,4.5) {\scriptsize $b$};
\node [style=circle,draw=black] (v7) at (-0.6,3.5) {\scriptsize $c$};
\node [style=circle,draw=black] (v8) at (0.1,4) {\scriptsize $d$};
\draw [->-]  (4) to (v5);
\draw [->-] (v5) to (v6);
\draw [->-] (v5) to (v7);
\draw [->-] (v6) to (v8);
\draw [->-] (v7) to (v8);
\draw [->-] (v8) to (3);
\end{scope}
%
%
\begin{scope}[shift={(1,0)}]
\draw[rounded corners,line width=0.3mm, gray] (2,0) rectangle (2.8,2);
\node [style=circle,draw=black] at (2.4,1) {\scriptsize $i$};
\end{scope}
%
%
\node (v1) at (-2.3,1) {};
\node (v2) at (-1.5,1) {};
\node (v3) at (3,1) {};
\node (v4) at (2.2,1) {};
\node (v9) at (-1.5,3.5) {};
\node (v10) at (-1.5,-1.5) {};
\node (v11) at (2.2,3.5) {};
\node (v12) at (2.2,-1.5) {};
\node (v14) at (0.3,2.6) {};
\node (v13) at (0.3,2) {};
\node (v15) at (0.3,0) {};
\node (v16) at (0.3,-0.6) {};
\draw [thick,->] (v1) edge [out=0,in=180] (v9);
\draw [thick,->] (v1) edge [out=0,in=180] (v10);
\draw [thick,->]  (v1) edge (v2);
\draw [thick,->] (v3) edge [out=180,in=0] (v11);
\draw [thick,->] (v3) edge [out=180,in=0] (v12);
\draw [thick,->] (v3) edge (v4);
\draw [thick,->] (v13) edge (v14);
\draw [thick,->] (v15) edge (v16);
\end{tikzpicture}
\]
Thus the rewrite rules
are also symmetric.  

\section{Conclusion} 
\label{sec:Conclusion}

We have taken a closer look at 
spans of cospans, 
extending the results of
the first author \cite{Cic}
by finding a symmetric monoidal
and compact closed structure
on the bicategory $\bimonspcsp{T}$.
This structure is relevant
when using $\bimonspcsp{T}$ as 
a framework in which to study
various networks.  Generally speaking,
the symmetric monoidal structure
allows us to consider disjoint networks
as a single network by taking a coproduct of the two networks,
and the compact closed structure allows us 
to turn open networks around, 
indicating that there is no
substantial difference between
inputs and outputs
other than a shift in perspective.  
The primary advantage of 
having this structure 
to study networks is that
in cases where we are able to find a sub-bicategory
of $\bimonspcsp{T}$
which gives the networks that are being considered
a sense of
inputs and outputs,
we can freely generate compact closed
bicategories with various families
of $1$-morphisms and $2$-morphisms.
This was illustrated
in Section $\ref{sec:Applications}$
by the compact closed sub-bicategory
$\cat{Rewrite}$ of $\mathbf{MonicSp(Csp(Graph))}$
which provided inputs and outputs 
to graphs.

\section{Acknowledgments}
The authors would like to thank John Baez for his numerous careful readings and suggestions. Without his guidance and wisdom, this paper would not have been possible. Also for his patience and humor, which don't get the credit that they deserve. Lastly, the authors would like to thank Susan Niefield for serving as transmitting editor and an anonymous referee for helpful comments.

%
%


%
%

\end{document}